\newtheorem{thm}{Theorem}[section]
\newtheorem{lem}{Lemma}[section]
\newtheorem{Prop}{Proposition}
\theoremstyle{remark}
\theoremstyle{definition}
\theoremstyle{remark}
\newtheorem{oss}{Remark}[section]
\newcommand{\be}{\begin{equation}}
\newcommand{\ee}{\end{equation}}
\newcommand{\R}{\mathbb{R}}
\newcommand{\N}{\mathbb{N}}
\newcommand{\C}{\mathbb{C}}
\newcommand{\CP}{\mathbb{C}\mathbb{P}}
\newcommand{\G}{\mathbf{g}}
\newcommand\res{\mathop{\hbox{\vrule height 7pt width .5pt depth 0pt
\vrule height .5pt width 6pt depth 0pt}}\nolimits}
\def\eps{\mathop{\varepsilon}}
\def\Mc{\mathop{\mathcal{M}_{}}}
\def\Tc{\mathop{\mathcal{T}_{}}}
\def\Gc{\mathop{\mathcal{G}}}
\def\Cc{\mathop{\mathcal{C}}}
\def\Ac{\mathop{\mathcal{A}_{}}}
\def\Ace{\mathop{\mathcal{A}_{\epsilon}}}
\def\Acer{\mathop{\mathcal{A}_{\epsilon, \rho}}}
\def\Acr{\mathop{\mathcal{A}_{\rho}}}
\def\Vc{\mathop{\mathcal{V}_{}}}
\def\Vce{\mathop{\mathcal{V}_{\epsilon}}}
\def\pt{\mathop{\left( \Phi^{-1} \right)}}
\def\Sc{\mathop{\mathcal{S}_{}}}
\def\Scr{\mathop{\mathcal{S}_{\rho}}}
\def\Sce{\mathop{\mathcal{S}_{\varepsilon}}}
\def\Scer{\mathop{\mathcal{S}_{\varepsilon, \rho}}}
\def\Hc{\mathop{\mathcal{H}_{}}}
\def\La{\Lambda}
\def\la{\lambda}
\def\Om{\Omega}
\def\om{\omega}
\def\P{\Phi}
\def\p{\partial}
\def\vt{\vartheta}
\def\Th{\Theta}
\begin{document}

\title{\textbf{Uniqueness of Tangent Cones to Positive-$(p,p)$ Integral Cycles}}
\author{\textit{Costante Bellettini\footnote{Princeton University \textit{Address}: Fine Hall, Washington Road
Princeton NJ 08544 USA \textit{E-mail address}: cbellett@math.princeton.edu}} } 
\date{}
\maketitle

\textbf{Abstract}: \textit{Let $(\Mc,\om)$ be a symplectic manifold, endowed with a compatible almost complex structure $J$ and the associated metric $g$. For any $p \in \{1, 2, ... \frac{1}{2}\text{dim}\Mc\}$ the form $\Om:=\frac{1}{p !}\om^p$ is a calibration. More generally, dropping the closedness assumption on $\om$, we get an almost hermitian manifold $(\Mc,\om, J, g)$ and then $\Om$ is a so-called semi-calibration. We prove that integral cycles of dimension $2p$ (semi-)calibrated by $\Om$ possess at every point a unique tangent cone. The argument relies on an algebraic blow up perturbed in order to face the analysis issues of this problem in the almost complex setting.}

\section{Introduction}
\label{setting}

The notion of calibration appeared in the foundational paper \cite{HL} in 1982, after that key features of calibrations had been observed in some particular cases in the previous decades (see \cite{Morgan} for a historical overview).

An immediate impact of calibrated currents was in connection with Plateu's problem, since these objects are mass-minimizers in their homology class and thus provide plenty of interesting and explicit examples of volume-minimizers. In the last fifteen years, however, calibrations have appeared surprisingly in many other geometric or physical problems, for example (see \cite{DoT}, \cite{SYZ}, \cite{Ta}, \cite{Ti}, \cite{Ti2}) theory of invariants, Yang-Mills fields, String theory, etc. Typically an essential issue in these studies is to understand regularity properties of calibrated currents.

\medskip

Already raised in \cite{HL}, one of the long-standing regularity questions is whether calibrated integral currents admit unique tangent cones. The issue is still open, except for currents of dimension $2$. Let us recall a few notions and the state of the art, before passing to the results of the present work.

\medskip

Given a $m$-form $\phi$ on a Riemannian manifold $(M,g)$, the comass of $\phi$ is defined to be
\[||\phi||^*:=  \sup \{\langle \phi_x, \xi_x \rangle: x \in M, \xi_x \text{ is a unit simple $m$-vector at } x\}.\]
A form $\phi$ of comass one is called a \textit{calibration} if it is closed ($d \phi = 0$). We will be dealing also with non-closed forms of unit comass, which will be referred to as \textit{semi-calibrations}, following the terminology in \cite{PR}. 

Let $\phi$ be a (semi)calibration; among the oriented $m$-dimensional planes of the Grassmannians $G(m, T_xM)$, we pick those on which $\phi$ agrees with the $m$-dimensional volume form. Representing oriented $m$-dimensional planes as unit simple $m$-vectors, we are thus selecting the subfamily of the so-called \textit{$m$-planes calibrated by $\phi$}:
\[\mathcal{G}(\phi) := \cup_{x \in M} \{\xi_x \in G(m, T_xM): \langle \phi_x, \xi_x \rangle = 1 \}.\]

\medskip

An integral current of dimension $m$ is said to be $\phi$-(semi)calibrated if, ${\Hc}^m$-almost everywhere, its approximate tangent planes belong to $\Gc$. A simple argument (see \cite{HL}) then shows that calibrated currents are homologically mass-minimizing, while semi-calibrated ones are almost minimizers (or $\la$-minimizers, using the terminology of \cite{DuzS}). 

\medskip

Examples of well-known calibrations are the symplectic form $\om$ in an almost K\"ahler manifold, its normalized powers $\frac{1}{p !}\om^p$, the Special Lagrangian calibration in Calabi-Yau $m$-folds, the Associative calibration, and many others. 

If we drop the closedness assumption on $\om$ in the definition of almost K\"ahler manifold, we get what is usually called an almost Hermitian manifold, and $\om$ and $\frac{1}{p !}\om^p$ are then semicalibrations. We will refer to these as almost-complex semi-calibrations.

\medskip

When dealing with a boundaryless integral current $C$, also called integral cycle, or simply when we localize the current to an open set in which the boundary is zero, it turns out that calibrated currents satisfy an important \textit{monotonicity} formula for the mass ratio: for any $x_0$, the quantity $\frac{M(C \res B_r(x_0))}{r^m}$ is a weakly increasing function of $r$. This is a classical result for mass-minimizers (see \cite{F}, \cite{SimonNotes} or \cite{Morgan}), proved for constant calibrations in \cite{HL}. 

When we turn our attention to almost-minimizers, what we get is an \textit{almost-monotonicity} formula, see \cite{DuzS} and \cite{SimonNotes}. Almost-monotonicity was proved for $C^1$ semi-calibrations in \cite{PR}: it states that the mass ratio $\frac{M(C \res B_r(x_0))}{r^m}$ is given by a weakly increasing function of $r$ plus a perturbation term, that is infinitesimal of $r$. The perturbation term is bounded in modulus independently of $x_0$.

\medskip

Immediate consequences of (almost) monotonicity are:

\begin{description}
 \item[(i)] the \textit{density} of the current is well-defined for every point $x_0$ as the limit 

$$\nu(x_0) := \lim_{r \to 0} \frac{M(C \res B_r(x_0))}{\alpha_m r^{m}} ,$$

where $\alpha_m$ is the $m$-dimensional volume\footnote{Recall that an arbitrary integral current $C$ is defined by assigning on an oriented $m$-rectifiable set $\Cc$ an integer-valued \textit{multiplicity function} $\theta \in L^1(\Cc, \N)$. For an arbitrary integral current, the density $\nu$ is well-defined ${\Hc}^m$-a.e. and agrees ${\Hc}^m$-a.e. with $\theta$. What we get for semicalibrated cycles is that the density $\nu$ is well-defined \underline{everywhere}, and we can take $\nu$ as the ``precise representative'' for the multiplicity $\theta$.} of the unit ball $B^m$.

 \item[(ii)] the density is an \textit{upper semi-continuous} function.
 
 \item[(iii)] the density of a semi-calibrated integral cycles is, everywhere on the support\footnote{The support of a current $C$ is the complement of the largest open set in which the action of the current is zero.}, bounded by $1$ from below \footnote{This fails for arbitrary integral currents, as the example of the current of integration on a cone (counted with multiplicity $1$) shows: the density at the vertex, although well-defined, depends on the opening angle of the cone (the narrower the cone is, the lower the density is). If we take, instead of a cone, a surface with a cusp point, the density there is $0$.}.
 
\end{description}

\medskip

Monotonicity further yields the existence of \textit{tangent cones}: this is a first step in the study of regularity of calibrated currents. The notion of tangent cone to a current $C$ at a point $x_0$ is defined by the following procedure, called the \textit{blow up limit}, whose idea goes back to De Giorgi \cite{DG}. Dilate $C$ around $x_0$ of a factor $r$; in normal coordinates around $x_0$ this amounts to pushing forward $C$ via the map  $\displaystyle \frac{x-x_0}{r}$:

\be
\label{eq:defblowup}
(C_{x_0,r} \res B_1 )(\psi):=\left[ \left( \frac{x-x_0}{r}\right)_\ast C \right](\chi_{B_1} \psi) = C \left( \chi_{B_r(x_0)}\left(\frac{x-x_0}{r}\right)^\ast \psi\right).
\ee

The fact that $\displaystyle \frac{M(C \res B_r(x_0))}{r^{2}}$ is monotonically almost-decreasing as $r \downarrow 0$ gives that, for $r\leq r_0$ (for a small enough $r_0$), we are dealing with a family of cycles $\{C_{x_0,r} \res B_1\}$ in $B_1$ that are equibounded in mass. Therefore Federer-Fleming's compactness theorem (see e.g. \cite{G} page 141) gives that there exist weak limits of $C_{x_0,r}$ as $r \to 0$. Every such limit $C_\infty$ is an integer multiplicity rectifiable boundaryless current which turns out to be a cone\footnote{A current is said to be a cone with vertex $p$ if it is invariant under omotheties centered at $p$.} calibrated by $\om_{x_0}$ and is called a tangent cone to $C$ at $x_0$. The density of each tangent cone at the vertex is the same as the density of $C$ at $x_0$ (see \cite{HL}). 

\medskip

The natural first question, raised already in \cite{HL}, is whether Federer-Fleming's compactness theorem can yield different sequences of radii with different cones as limits, i.e. whether the tangent cone at an arbitrary point is \textit{unique} or not. The answer is positive for semi-calibrated integral cycles of dimension $2$, as proved in \cite{PR}. The uniqueness is also known for mass-minimizing integral currents of dimension $2$, thanks to \cite{W}. In some other cases, which also follow from either of the forementioned \cite{W} or \cite{PR}, the proof has been achieved using techniques of positive intersection, namely for integral pseudo-holomorphic cycles in dimension $4$ (\cite{Ta}, \cite{RT1}) and for integral Special Legendrian cycles in dimension $5$ (\cite{BR}, \cite{B}). In \cite{RT2} the uniqueness for pseudo holomorphic integral $2$-dimensional cycles is achieved in arbitrary codimension. In \cite{Simon} it is proved that if a tangent cone to a minimal integral current has multiplicity one and has an isolated singularity, then it is unique. 

In dimensions higher than two, apart from L. Simon's result \cite{Simon}, the uniqueness of tangent cones is a completely open question. In this work we give a positive answer in the case of pseudo-holomorphic integral currents (i.e. semi-calibrated by the almost complex semi-calibration $\frac{1}{p !}\om^p$) of arbitrary dimension and codimension.

\medskip

The uniqueness cannot be obtained merely as a consequence of the monotonicity of the mass ratio. Indeed, the notion of being calibrated by $\om$ can be extended from integral currents to normal ones (then it is usually called \textit{$\om$-positiveness}, as in \cite{HL}). Normal positive cycles still fulfil the same monotonicity formula, but it was proved in \cite{Kis} that they might have non-unique tangent cones.

\medskip

The approach presented in this work relies on an algebraic blow up technique, adapted to the almost complex setting, that shows how, for almost-complex semi-calibration, uniqueness of tangent cones can indeed be obtained for \underline{integral} semi-calibrated cycles just as a consequence of almost monotonicity and of the fact that the density is bounded by $1$ from below. 

The same approach was used in \cite{B2} for pseudo-holomorphic non-rectifiable currents of dimension $2$. Here we extend that technique to higher dimensional pseudo-holomorphic currents: these are also referred to as \textit{positive-$(p,p)$ currents}, as we will see in the next section, where we describe more closely the setting and the result.

\medskip

\section{The main result}
\label{result}

Let $(\mathcal{M}, J)$ be an almost complex manifold of dimension $2n+2$, where $J$ is an almost complex structure. Let $\om$ be a non-degenerate differential form of degree $2$ compatible with $J$. If $d \om = 0$ then we have a symplectic form, but we will not need to assume closedness. Let $g$ be the Riemannian metric defined by $g(\cdot, \cdot) := \om(\cdot, J \cdot)$. The triple $(\mathcal{M}, J, g)$ is an almost Hermitian manifold; when the associated form $\om$ is closed, we get an almost K\"ahler manifold. The word ``almost'' refers to the fact that $J$ can be non-integrable.

\medskip

On the other hand, given a constant non-degenerate differential $2$-form $\om$ in a $(2n+2)$-dimensional vector space $V$ with a scalar product, if $\om$ has unit comass then the non-zero form $\om^{n+1}$ is the volume form and $(V, \om, g)$ is a symplectic vector space with a compatible metric. So given any Riemannian manifold $(\mathcal{M}, g)$ (where $g$ is the metric), if $\om$ is a non-degenerate two-form with pointwise unit comass, then the non-zero form $\om^{n+1}$ is the volume form and, for any $x \in \Mc$, $(T_x \Mc, \om, g)$ is a symplectic vector space with a compatible metric. Then we can define an almost complex structure $J$ on the tangent bundle of $\Mc$ by setting $J:=g^{-1} \om$.  

\medskip

It is therefore equivalent for the purposes of this work to start with an almost complex structure $J$ or with a semicalibration $\om$ having pointwise unit comass. We can always assume to have a compatible triple $(\om, g, J)$ on $\Mc$.

\medskip

For any fixed integer $p \in \{1, 2, ... n\}$ the form $\Om:=\frac{1}{p !}\om^p$ is a semi-calibration on $\Mc$ for the metric $g$, i.e. the comass $\|\Om\|^*$ is $1$. This is nothing else but Wirtinger's inequality (\cite{Wirtinger}).	
If $\om$ is closed, then so is $\Om$ and we get a calibration.

\medskip

Recall that the family $\mathcal{G}(\Om)$ of \textit{$2p$-planes calibrated by $\Om$} is
\[\mathcal{G}(\Om) := \cup_{x \in \Mc} \; {\Gc}_x := \cup_{x \in \Mc} \{\xi_x \in G_{2p}(x, T_x \Mc): \langle \Om_x, \xi_x \rangle = 1 \}.\]

By Wirtinger's theorem \cite{Wirtinger} ${\Gc}_x$ is made exactly of the $2p$-dimensional $J_{x}$-complex subspaces of $T_x \Mc$. For this reason the $2p$-planes in $\mathcal{G}(\Om)$ are also called \textbf{positive-$(p,p)$} vectors. It is worthwile noticing that the property of being a complex subspace of $(T_x \Mc, J_x)$ is not affected by choosing different couples $(g_x, \om_x)$ and $(g'_x, \om'_x)$ compatible with $J_x$ in $T_x \Mc$. Therefore a positive-$(p,p)$ vector is calibrated both by $\Om:=\frac{1}{p !}\om^p$ in $(T_x \Mc, g)$ and $\Om':=\frac{1}{p !}\om'^p$ in $(T_x \Mc, g')$. This observation is of key importance for our proof.

\medskip

We will consider a $2p$-dimensional integral cycle $T$ (semi)-calibrated by $\Om$. This means that $\vec{T}$ (the oriented approximate tangents to $T$) belong ${\Hc}^{2p}$-a.e. to $\Gc(\Om)$, or equivalently that $T(\Om) = \int_{\Mc} \langle \Om, \vec{T} \rangle d\|T\| = M(T)$.

The absence of boundary means that it holds, for any compactly supported $(2p-1)$-form $\alpha$, $(\p C)(\alpha) := C(d \alpha) = 0$.

\medskip

The issues we will be dealing with, namely tangent cones, are local: we will be only interested in the asymptotic behaviour of currents around a point, so we can assume to work in a chart rather than on a manifold. $\Om$-positive normal cycles in $\R^{2n+2}$ satisfy the following important \textit{almost monotonicity property} for the mass-ratio at any point $x_0$.

\begin{Prop}
\label{Prop:monotonicity}
Let $\R^{2n+2}$ be endowed with a Riemannian metric $g$ and a non-degenerate two-form $\om$ of unit comass. Denote by $\Om$ the semi-calibration $\frac{1}{p !}\om^p$. Let the $2p$-dimensional normal cycle $T$ be $\Om$-positive and let $x_0$ be an arbitrary point. Denote by $B_r(x_0)$ the geodesic ball around $x_0$ of radius $r$.

For an arbitrarily chosen point $x_0$, the mass ratio $\frac{M(T \res B_r(x_0))}{r^{2p}}$ is an almost-increasing function in $r$, i.e. $\displaystyle \frac{M(T \res B_r(x_0))}{r^{2p}}=R(r) + O(r)$ for a function $R$ which is monotonically non-increasing as $r \downarrow 0$ and a function $O(r)$ which is infinitesimal. 
\end{Prop}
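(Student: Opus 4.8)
The plan is to derive an almost-monotonicity differential inequality for the function $m(r):=M(T\res B_r(x_0))$ directly from the positivity of $T$, exploiting the two features that make $\Om$ a \emph{semi}-calibration rather than a genuine one: that $T$ realizes $\Om$ in the sense that $M(T\res A)=(T\res A)(\Om)$ for every Borel set $A$, and that the defect from closedness $d\Om=\frac{1}{(p-1)!}\om^{p-1}\wedge d\om$ is a smooth, hence locally bounded, $(2p+1)$-form, say $\|d\Om\|^*\le\La$ on a neighbourhood of $x_0$. The comparison object will be the cone over the spherical slice of $T$, and the entire non-Euclidean and non-closed character of the problem will be absorbed into an $O(r)$ error term, which is precisely what the statement allows.

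Concretely, first I would work in geodesic normal coordinates centred at $x_0$, so that $g=\delta+O(|x|^2)$; this makes geodesic balls agree with Euclidean balls up to a factor $1+O(r)$ and lets me compare the comass of $\Om$ for $g$ with its comass for the frozen metric $g_{x_0}$ with an error $O(r)$. Writing $\rho(x):=\mathrm{dist}_g(x,x_0)$, I would slice $T$ by the level sets of $\rho$: since $\p T=0$, for almost every $r$ the slice $\Sigma_r:=\langle T,\rho,r\rangle$ satisfies $\p(T\res B_r(x_0))=\Sigma_r$, and the coarea inequality (valid by the slicing theory of normal currents, using $|\nabla^{T}\rho|\le 1$) gives $m'(r)\ge M(\Sigma_r)$ for a.e.\ $r$. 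I would then form the cone $C_r$ with vertex $x_0$ over $\Sigma_r$ via the linear homotopy $h(t,y)=x_0+t(y-x_0)$, whose mass obeys the elementary bound $M(C_r)=\frac{r}{2p}M(\Sigma_r)\bigl(1+O(r)\bigr)$, the $O(r)$ again coming from the metric distortion.

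The core step is the calibration comparison. Because $C_r$ and $T\res B_r(x_0)$ share the boundary $\Sigma_r$, the homotopy formula produces a $(2p+1)$-current $R_r$ supported in $\overline{B_r(x_0)}$ with $T\res B_r(x_0)-C_r=\p R_r$ and $M(R_r)\le \frac{r}{2p+1}\,m(r)\,(1+O(r))$. Applying positivity and then Stokes,
\be
m(r)=(T\res B_r(x_0))(\Om)=C_r(\Om)+(\p R_r)(\Om)=C_r(\Om)+R_r(d\Om).
\ee
Bounding $C_r(\Om)\le M(C_r)$ and $|R_r(d\Om)|\le \La\,M(R_r)$ and inserting the two mass estimates yields $m(r)\le \frac{r}{2p}M(\Sigma_r)+c\,r\,m(r)$ for a constant $c$ depending only on $\La$ and the local geometry. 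Combined with $M(\Sigma_r)\le m'(r)$ this gives the differential inequality $2p\,m(r)\le r\,m'(r)+c'\,r\,m(r)$, i.e.\ $\frac{d}{dr}\log m(r)\ge \frac{2p}{r}-c'$, so that $\frac{d}{dr}\bigl(e^{c'r}\,r^{-2p}m(r)\bigr)\ge 0$. Since $e^{c'r}=1+O(r)$ and the density is finite, one rewrites $r^{-2p}m(r)$ as a function $R(r)$ that is non-increasing as $r\downarrow0$ plus a term that is $O(r)$, which is the assertion.

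The main obstacle is the careful bookkeeping of the two independent sources of error and the verification that both are genuinely $O(r)$: the Riemannian distortion, which has to be controlled uniformly by working in normal coordinates and comparing the comass of $\Om$ against its value at $x_0$, and the non-closedness defect $R_r(d\Om)$, which has to be estimated through a clean mass bound on the homotopy current $R_r$. A secondary but indispensable technical point, to be handled with the correct slicing theory for normal (not merely integral) cycles, is the justification of $\p(T\res B_r(x_0))=\langle T,\rho,r\rangle$ and of the coarea inequality $m'(r)\ge M(\Sigma_r)$ for almost every $r$; once these are in place, the cone comparison itself is essentially algebraic.
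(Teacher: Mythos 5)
Your argument is correct, and it is essentially the proof that the paper itself points to rather than a new one: the paper does not prove Proposition \ref{Prop:monotonicity} at all, but defers to Proposition 1 of \cite{PR} (and to the appendix of \cite{B2} for Lipschitz semicalibrations), and those proofs run on the same mechanism you use — slice by the distance function, compare $T\res B_r(x_0)$ with the cone over the slice via the radial homotopy and Stokes, absorb metric distortion and non-closedness into an $O(r)\,m(r)$ error (where $m(r):=M(T\res B_r(x_0))$), and integrate the resulting differential inequality. Two points of comparison are still worth making. First, you control the defect by $|R_r(d\Om)|\le\La\,M(R_r)$, which presupposes $\om\in C^1$; the cited proofs instead freeze the form at $x_0$ and use only $\|\Om-\Om_{x_0}\|^*\le L\,|x-x_0|$, and this variant is not cosmetic for this paper: the almost-monotonicity is later invoked for the proper transform $P$, whose semicalibration $\Th$ is merely Lipschitz (lemma \ref{lem:lipcontrolI}), and the frozen-form estimate is what directly yields both that case and the uniform-in-$x_0$ bound $C\cdot L\cdot r$ quoted after the proposition (your route can be adapted to Lipschitz forms, since they have bounded distributional exterior derivative, but testing a normal current against the $L^\infty$ form $d\Om$ then needs a smoothing justification). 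Second, a small technical point in your last step: $m(r)$ is monotone but need not be absolutely continuous ($\|T\|$ may charge spheres), so the passage from the a.e. inequality $\frac{d}{dr}\log m(r)\ge\frac{2p}{r}-c'$ to monotonicity of $e^{c'r}r^{-2p}m(r)$ should be phrased as an integral inequality, using that $f(b)-f(a)\ge\int_a^b f'\,dt$ for non-decreasing $f$; with that, your bookkeeping closes.
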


This is proved in \cite{PR}, Proposition 1. It is important to notice that the same proof works if we assume the semi-calibration to be just Lipschitz continuous rather than $C^1$, see the appendix of \cite{B2}. The perturbation term $O(r)$ is bounded, \underline{independently} of $x_0$, by $C \cdot L \cdot r$, where $C$ is a dimensional constant and $L$ is the Lipschitz constant of the semi-calibration.

\medskip

\medskip

In this work we prove:

\begin{thm}
\label{thm:mainrect}
Given an almost complex $(2n+2)$-dimensional manifold $(\mathcal{M}, J, \om, g)$, with a non-degenerate two-form $\om$ compatible with $J$ and associated Riemannian metric $g$, denote by $\Om$ the form $\Om:=\frac{1}{p !}\om^p$, for a fixed $p \in \{1, 2, ..., n\}$; let $T$ be a positive-$(p,p)$ integral cycle, i.e. an integral $2p$-cycle semi-calibrated by $\Om$.

Then for any $x_0$ the tangent cone to $T$ at $x_0$ is unique.
\end{thm}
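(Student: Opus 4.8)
The plan is to localize the problem and then resolve the vertex singularity by a blow-up adapted to $J$, turning the uniqueness statement into an induction on the density. First I would work in a chart centred at $x_0=0$, chosen so that $J(0)=J_0$ is the standard complex structure on $\C^{n+1}=\R^{2n+2}$, $g(0)$ is Euclidean and $\om(0)=\om_0$ is the standard symplectic form; since $J$ is smooth we have $J(x)=J_0+O(|x|)$, and correspondingly $\om=\om_0+O(|x|)$. By Proposition \ref{Prop:monotonicity} the blow-ups $T_{0,r}\res B_1$ are equibounded in mass and every weak limit is an $\Om_0$-calibrated integral cone; by Wirtinger's theorem (\cite{Wirtinger}) its tangent planes are $J_0$-complex, so such a cone is a positive holomorphic $p$-cycle $C=\sum_j m_j[V_j]$, with the $V_j$ irreducible complex algebraic cones through $0$ and $m_j\in\N$. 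Its density at the vertex equals $\nu(0)=\sum_j m_j\deg V_j$, so in particular $N:=\nu(0)$ is a \emph{positive integer} --- this quantization, which fails for merely normal positive cycles, is what distinguishes the integral case and will drive the induction. Since $r\mapsto T_{0,r}$ is continuous in the flat topology, its set of subsequential limits (the tangent cones) is connected; it therefore suffices to produce enough rigidity to collapse this connected set to a point.

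The core device is a \emph{perturbed algebraic blow-up}. Let $\pi:\ti{\Mc}\to\C^{n+1}$ be the blow-up of the origin, with exceptional divisor $E\cong\CP^{n}$. Away from $E$ the map $\pi$ is a diffeomorphism, so I can push $J,\om,g$ forward and then modify them near $E$ to a compatible triple $(\ti{J},\ti{\om},\ti{g})$ for which $\pi$ is holomorphic to leading order; concretely $\ti{J}$ is chosen to agree along $E$ with the structure induced by $J_0$ under the standard (integrable) blow-up, the $O(|x|)$ error of $J$ being harmless because points near $E$ correspond to $x\to 0$. One then forms the proper transform $\ti{T}$ of $T$, an integral $2p$-cycle on $\ti{\Mc}$. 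The point of the construction is that $\ti{T}$ is semi-calibrated by $\ti{\Om}:=\frac1{p!}\ti{\om}^{p}$ --- here I use the metric-independence of the positive-$(p,p)$ condition stressed in the previous section --- and hence again satisfies the almost-monotonicity of Proposition \ref{Prop:monotonicity}. Moreover the tangent cone to $T$ at $0$ is determined by, and determines, the way $\ti{T}$ meets $E$: the intersection cycle $\ti{T}\cap E\subset\CP^{n}$ is exactly the projectivisation $\sum_j m_j[\ti{V_j}]$ of the tangent cone. Thus uniqueness of the tangent cone is \emph{equivalent} to $\ti{T}\cap E$ being a well-defined positive $(p-1,p-1)$-cycle, independent of how one approaches $E$.

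With this in hand I would argue by induction on $N=\nu(0)$. If $N=1$ the density is one in a neighbourhood and the regularity theory for almost-minimizers (\cite{DuzS}) makes $T$ a $C^{1,\al}$ pseudo-holomorphic submanifold near $0$, so the tangent plane is unique; this is the base case. For $N>1$, set $M:=\max_{e\in E}\nu_{\ti{T}}(e)\le N$. When $M<N$, the inductive hypothesis applies to $\ti{T}$ at every point of $E$, giving a unique tangent cone there; an upper-semicontinuity and covering argument then upgrades this to well-definedness of the slice $\ti{T}\cap E$, whence the tangent cone of $T$ is unique. The remaining, genuinely delicate, case $M=N$ forces the whole mass to concentrate in a single direction, i.e. every tangent cone is a multiple $p$-plane $m[\pi_0]$; here the blow-up does not lower the density, and I would instead intersect $T$ with generic $J_0$-complex slices to reduce to the two-dimensional situation and invoke the known uniqueness for pseudo-holomorphic curves (\cite{RT2}, \cite{B2}), complex slicing being admissible precisely because intersections of complex objects stay complex.

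I expect the main obstacle to be analytic rather than formal: making the perturbed blow-up \emph{rigorous in the almost-complex category}. One must show that the proper transform is a genuine integral cycle with no spurious boundary on $E$, that the induced form $\ti{\Om}$ still has unit comass on the approximate tangent planes of $\ti{T}$ (so that $\ti{T}$ is semi-calibrated and almost-monotonicity survives), and that the non-integrability error $J-J_0=O(|x|)$ --- which the blow-up magnifies by the factor $|x|^{-1}$ inherent in dilating around $0$ --- is exactly absorbed by the contraction $\pi$ near $E$. Balancing this magnification against the decay of the error is the heart of the matter, and is the reason the algebraic blow-up must be \emph{perturbed}; establishing that $\ti{T}\cap E$ is well-defined in the concentrated case $M=N$, where no density is gained, is the second serious difficulty, and is where integrality together with the lower density bound $\nu\ge 1$ must be used most carefully.
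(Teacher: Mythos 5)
You correctly identify the paper's core device --- the algebraic blow-up adapted to $J$, the proper transform of $T$ being an integral cycle with no boundary on the exceptional divisor $E$ and semi-calibrated by a Lipschitz perturbation of the standard form (this is lemma \ref{lem:normalposcycle}), and the fact that uniqueness is governed by how the transform meets $E$. But the mechanism you propose to extract uniqueness from this, the induction on $N=\nu(0)$, has genuine gaps in both branches of the inductive step. In the case $M<N$, the inductive hypothesis only gives uniqueness of the \emph{pointwise} tangent cones of $\ti{T}$ at individual points $e\in E$; the tangent cone of $T$ at $0$, however, is not a pointwise blow-up of $\ti{T}$ at any point of $E$. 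It corresponds to the limit of $(\Lambda_{r_n})_*\ti{T}$, where $\Lambda_\rho(\ell,z)=(\ell,z/\rho)$ dilates only the fiber variables and fixes all of $E$ (see (\ref{eq:dilblow})); this is a global object along $E$, and the ``upper-semicontinuity and covering argument'' you invoke to pass from pointwise uniqueness along $E$ to uniqueness of this fiberwise limit is precisely the missing step --- it is not supplied, and it is where the real work lies. In the concentrated case $M=N$ the proposed reduction fails more bluntly: when $J$ is non-integrable, slicing $T$ by $J_0$-complex planes does not produce pseudo-holomorphic or semi-calibrated $2$-cycles, since a $J_0$-complex slice of a $J$-complex tangent plane need not be $J$-complex, so \cite{RT2} and \cite{B2} do not apply; and even for honest holomorphic chains, uniqueness of tangent cones of generic slices would not formally imply uniqueness for the original current.

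For comparison, the paper needs no induction, no appeal to the regularity theory of \cite{K}, \cite{HS}, \cite{Ale}, and no Allard-type base case. After the blow-up machinery it argues in two steps. First, the \emph{support} of the tangent cone is unique (lemma \ref{lem:uniqsupport}): if $y_0$ lies in the support of the link $L_\infty$ of one cone, there are points of $\Tc$ of density $\geq 1$ converging to $0$ along directions projecting to $y_0$; densities survive the proper transform, the proper transforms of \emph{any} other sequence of dilations converge to the proper transform of the corresponding cone (lemma \ref{lem:pinfty}), and upper semicontinuity of the density in the blown-up space --- where almost-monotonicity holds because the perturbed semi-calibration is Lipschitz --- forces $y_0$ into the support of every other link. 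Second, the \emph{multiplicities} agree: the set of tangent cones is closed and connected (lemma \ref{lem:closedconn}), and for calibrated integral cycles sharing a fixed compact support the density at a fixed point is continuous under weak convergence (lemma \ref{lem:contdensity}); being integer-valued ${\Hc}^{2p}$-a.e., it is locally constant, hence constant on the connected family of cones. This combination of connectedness, integrality, and continuity of the density is exactly the rigidity your sketch is looking for, and it replaces both branches of your induction.
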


\medskip

Theorem \ref{thm:mainrect} reduces to the result in \cite{PR} in the case $p=1$ and follows from \cite{W} if $p=1$ and $d \om=0$.

\medskip

With a suitable choice of coordinates we can identify the tangent space $T_{x_0} \mathcal{M}$, endowed with the complex structure $J_{x_0}$, with $\C^{n+1}$: then every tangent cone $T_\infty$ to $T$ at $x_0$ is a positive-$(p,p)$ cone in $\C^{n+1}$: such a cone is uniquely defined by a holomorphic $(p-1,p-1)$ integral cycle $L_\infty$ in $\CP^n$. 

Using the regularity theory for holomorphic integral cycles (\cite{K}, \cite{HS}, \cite{Ale}) we can deduce that $L_\infty$ is in fact the sum of a finite number of holomorphic algebraic varieties\footnote{We are slighlty abusing language here: these are algebraic varieties that are holomorphic away from their possible singular set.}, each one taken with a constant integer multiplicity, but we will not need this result.

\medskip

We will prove first the following

\begin{lem}
\label{lem:uniqsupport}
Let $T$ be a $\Om$-semi-calibrated integral cycle and $x_0$ an arbitrary point. Then all tangent cones to $T$ at $x_0$ have a uniquely determined support.
\end{lem}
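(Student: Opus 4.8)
The plan is to recast the statement as a question about Hausdorff convergence of the supports of the rescaled cycles, and then to read off the answer from the algebraic blow-up of $\C^{n+1}$ at $x_0$. First I would record the soft fact that, for the family $C_{x_0,r}$ produced by (\ref{eq:defblowup}), weak convergence $C_{x_0,r_i}\rightharpoonup T_\infty$ forces Hausdorff convergence of the supports on $\bar B_1$. This is exactly where the two elementary consequences of almost monotonicity enter: upper semi-continuity of the density rules out support \emph{appearing} in the limit, while the lower bound $\nu\geq 1$ on the support rules out support \emph{disappearing} (a support point of $C_{x_0,r_i}$ carries mass $\geq\alpha_{2p}\rho^{2p}$ in every ball $B_\rho$, so it cannot be annihilated under the weak limit). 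Hence $\mathrm{supp}(T_\infty)=\lim_i \tfrac1{r_i}\big(\mathrm{supp}(T)-x_0\big)$ in Hausdorff distance on $\bar B_1$, and since $T_\infty$ is a positive-$(p,p)$ cone its support is the cone over $\mathrm{supp}(L_\infty)\subset\CP^n$. Thus it suffices to prove that $\mathrm{supp}(L_\infty)$ is independent of the chosen sequence $r_i\downarrow 0$.

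Next I set up the model. Using $J_{x_0}$ to identify $T_{x_0}\Mc\cong\C^{n+1}$, let $\pi:\C^{n+1}\setminus\{0\}\to\CP^n$ be the projection onto $J_{x_0}$-complex lines, which is invariant under dilations. Consider the algebraic blow-up $\Phi:\widetilde{\C^{n+1}}\to\C^{n+1}$ of the origin, with exceptional divisor $E\cong\CP^n$, and the strict transform $\widetilde T$ of $T$. The key observation is that the set $F_0:=\bigcap_{r>0}\overline{\pi\big(\mathrm{supp}(T)\cap B_r(x_0)\setminus\{x_0\}\big)}$ is exactly $\mathrm{supp}(\widetilde T)\cap E$: it is a decreasing intersection of compacta in $\CP^n$, hence the Hausdorff limit of $\overline{\pi(\mathrm{supp}(T)\cap B_r(x_0))}$ as $r\downarrow 0$, and is manifestly independent of any sequence. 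From the Hausdorff convergence recorded above one obtains the easy inclusion $\mathrm{supp}(L_\infty)\subseteq F_0$ for every tangent cone $T_\infty$.

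The core of the proof is the reverse inclusion $F_0\subseteq \mathrm{supp}(L_\infty)$, i.e. that a direction appearing in the support at arbitrarily small scales must survive in \emph{every} blow-up limit. Here I would argue by propagation of support across scales: if $\ell\in F_0$ there is $y\in\mathrm{supp}(T)$ near $x_0$ with $\pi(y-x_0)$ close to $\ell$, at some radius $s=|y-x_0|$; the lower density bound yields mass $\geq\alpha_{2p}(s/2)^{2p}$ in $B_{s/2}(y)$, producing further support points in an annulus of comparable radii and of bounded angular spread, and chaining these one reaches every radius comparable to the fixed blow-up scale $r_i$. The almost-monotonicity formula, whose perturbation error is $O(r)$ \emph{uniformly in the centre}, is what controls (and allows one to sum) the angular drift incurred at each step, so that the direction cannot wander away from $\ell$ as the radius grows from $s$ up to $r_i$. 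This forces a support point of $C_{x_0,r_i}$ at radius $\approx 1$ projecting near $\ell$, whence $\ell\in\mathrm{supp}(L_\infty)$.

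I expect the main obstacle to be precisely this angular-drift estimate in the genuinely almost-complex, only semi-calibrated setting: $J$ is complex only at $x_0$ and $\om$ is not closed, so the blow-up centred at $x_0$ does not send $T$ to an honest holomorphic object, and one only has monotonicity up to the $O(r)$ perturbation. This is exactly what the perturbed algebraic blow-up developed below is meant to absorb; the technical heart will be to verify that the mass and monotonicity estimates pass to the strict transform with errors that remain summable along the chaining, so that the limiting direction — and hence $\mathrm{supp}(L_\infty)=F_0$ — is well defined.
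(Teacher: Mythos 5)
Your first two paragraphs are a sound reduction and essentially reproduce what the paper establishes in lemma \ref{lem:ovvio} and in the reformulation of proposition \ref{Prop:restate}: supports of tangent cones are Hausdorff limits of the rescaled supports, every point of $\mathrm{supp}(L_\infty)$ arises from directions of support points $x_m \to x_0$, and hence $\mathrm{supp}(L_\infty) \subseteq F_0$ for every cone. The genuine gap is your third paragraph, which is the entire content of the lemma. Your chaining scheme has no mechanism behind it: the density lower bound at a support point $y$ with $|y-x_0|=s$ localizes mass only in balls $B_\rho(y)$, which lie in an angular sector of opening roughly $\arcsin(\rho/s)$ around the direction of $y$; each dyadic step outward therefore costs a \emph{fixed} angle, and the number of steps needed to climb from the scale $s$ of a witness point up to the blow-up scale $r_i$ is unbounded (this is the heart of the difficulty: the scales $s_m=|x_m-x_0|$ may be interleaved arbitrarily between the $r_i$). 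Almost-monotonicity cannot repair this: it is a statement about mass ratios at a fixed center and carries no angular information; the only directional control it yields is the bound on $\int |\vec{T}\wedge\p_r|^2/|y-x_0|^{2p}\,d\|T\|$, which controls the sum of the \emph{squares} of the drifts over dyadic annuli, not the sum of the drifts themselves. That discrepancy is exactly why uniqueness of tangent cones is a hard problem, and why Kiselman's spiraling example exists for normal positive cycles, which satisfy the very same almost-monotonicity formula; your use of integrality (density $\geq 1$ at each step) enters only to produce the next support point, not to make the drifts summable, so the scheme as described cannot close.

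What the paper does instead --- and what is absent from your write-up --- is to run the density argument \emph{upstairs}, where no chaining across scales is needed. After the perturbation $\Psi$ of (\ref{eq:psi}) making the planes $D^X$ pseudo-holomorphic, the blown-up triple $(I,\G,\vt)$ is Lipschitz up to the exceptional divisor (lemma \ref{lem:lipcontrolI}), the proper transform $\tilde{P}_n$ of each $T_{0,R_n}$ is an integral cycle semi-calibrated by the Lipschitz form $\Th_n$ (lemma \ref{lem:normalposcycle}), and $\tilde{P}_n \rightharpoonup {\pt}_* \tilde{T}_\infty$ (lemma \ref{lem:pinfty}); consequently almost-monotonicity, and with it upper semi-continuity of the density, holds for each $\tilde{P}_n$ in $\Ac$ including at points of $\CP^n\times\{0\}$. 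Then the key observation: your witnesses $x_m$, rescaled by any \emph{fixed} $R_n$, have proper transforms $([x_m], x_m/R_n)$ which are points of density $\geq 1$ of $\tilde{P}_n$ accumulating at the single point $(\ell,0)$ of the exceptional divisor, no matter how $|x_m-x_0|$ compares with $R_n$. A single application of upper semi-continuity gives density $\geq 1$ at $(\ell,0)$ for every $\tilde{P}_n$, and passing to the limit (monotonicity plus the argument of remark \ref{oss:contmass}) gives density $\geq 1$ for ${\pt}_*\tilde{T}_\infty$, i.e. $\ell \in \mathrm{supp}(\tilde{L}_\infty)$. This one step replaces your entire angular-drift argument; in your proposal the blow-up serves only as notation for $F_0$, whereas its analytic role --- guaranteeing a Lipschitz semi-calibration upstairs so that the density machinery applies \emph{on} the exceptional divisor, where all scales accumulate simultaneously --- is precisely the missing idea.
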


Once this lemma is achieved, the uniqueness of tangent cones (i.e. theorem \ref{thm:mainrect}) follows with a few extra considerations (without making use of the results in \cite{K}, \cite{HS}, \cite{Ale}) developed in section \ref{proof}, namely: \textbf{(i)} the space of tangent cones to $T$ at $x_0$ is closed and connected in the space of $2p$-integral cycles, \textbf{(ii)} the density is continuous under convergence of calibrated integral cycles sharing the same support. 

\medskip

As for lemma \ref{lem:uniqsupport}, the key idea for its proof is the analysis implementation, in the almost complex setting in which we are working, of the classical algebraic blow up. This was already used in \cite{B2} for positive-$(1,1)$ normal cycles and is here generalized to higher dimensional pseudo-holomorphic currents. The technique clearly shows that the uniqueness of theorem \ref{thm:mainrect} holds just for ``density reasons'' (recall that the uniqueness can fail when we look at non-rectifiable currents, where the density is allowed to take any values $\geq 0$, see \cite{Kis} and \cite{B2}).

\section{Strategy and tools for the proof of the theorem}
\label{tools}

The first important remarks are contained in the following

\begin{lem}
\label{lem:ovvio}
Let $T$ be as in theorem \ref{thm:mainrect} and be $\Tc$ the support of $T$. Assume that there exists a sequence of points $x_m \in \Tc$ with $x_m \to x_0$ and $x_m \neq x_0$ such that $\frac{x_m - x_0}{|x_m - x_0|} \to y \in S^{2n+1}$. Then there exists a tangent cone to $T$ at $x_0$, say $T_\infty$, such that the point $y$ belongs to the support of $T_\infty$.

On the other hand, if $y \in S^{2n+1}$ belongs to the support of a tangent cone $T_\infty$ to $T$ at $x_0$, then there exists a sequence $x_m \to x_0$ (with $x_m \neq x_0$) of points $x_m$ in the support of $T$ such that $\frac{x_m - x_0}{|x_m - x_0|} \to y$.
\end{lem}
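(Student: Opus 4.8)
The plan is to identify the support of each tangent cone with the set of limiting directions of $\Tc$ at $x_0$, working entirely through the blow-up family $C_{x_0,r}$ of \eqref{eq:defblowup}. Beyond almost-monotonicity (Proposition \ref{Prop:monotonicity}) and Federer--Fleming compactness, which already guarantee that tangent cones exist as weak limits $C_{x_0,r_k} \rightharpoonup T_\infty$, the essential analytic input is that along a blow-up sequence the mass measures converge \emph{with no cancellation}. Concretely, I would first show $\|C_{x_0,r_k}\| \rightharpoonup \|T_\infty\|$ weakly-$*$ as Radon measures. This follows from the semi-calibrated structure: each $C_{x_0,r_k}$ is semi-calibrated by the rescaled form $\Om_k(\cdot):=\Om(x_0+r_k\,\cdot)$, of comass $1$ and converging uniformly to the constant calibration $\Om_{x_0}$ that calibrates $T_\infty$; testing against a cutoff $\eta$ gives $\int\eta\,d\|C_{x_0,r_k}\|=C_{x_0,r_k}(\eta\,\Om_k)\to T_\infty(\eta\,\Om_{x_0})=\int\eta\,d\|T_\infty\|$, since $\langle\Om_k,\vec{C}_{x_0,r_k}\rangle=1$ $\Hc^{2p}$-a.e.\ and likewise for $T_\infty$.

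For the first assertion, given $x_m\in\Tc$ with $x_m\to x_0$ and $\frac{x_m-x_0}{|x_m-x_0|}\to y$, set $r_m:=|x_m-x_0|$. Up to a subsequence $C_{x_0,r_m}\rightharpoonup T_\infty$ for some tangent cone, and the point $y_m:=\frac{x_m-x_0}{r_m}=\frac{x_m-x_0}{|x_m-x_0|}$ lies in $\mathrm{supp}(C_{x_0,r_m})$ (being the image of $x_m\in\Tc$ under the dilation) with $y_m\to y$. Because $y_m$ is a support point of a semi-calibrated cycle, the lower density bound ($\nu\geq 1$ on the support, point (iii) of Section \ref{setting}) together with almost-monotonicity---whose perturbation term for $C_{x_0,r_m}$ carries the factor $r_m\to 0$ and is asymptotically negligible---yields a uniform bound $\|C_{x_0,r_m}\|(\bar B_\sigma(y_m))\geq c\,\sigma^{2p}$ for small $\sigma$. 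For large $m$ one has $\bar B_{\sigma/2}(y_m)\subset\bar B_\sigma(y)$, so by the weak-$*$ convergence of the mass measures and the upper-semicontinuity of measures on compact sets, $\|T_\infty\|(\bar B_\sigma(y))\geq\limsup_m\|C_{x_0,r_m}\|(\bar B_{\sigma/2}(y_m))\geq c\,(\sigma/2)^{2p}>0$. Since $\sigma$ is arbitrary, $y\in\mathrm{supp}(T_\infty)$.

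For the converse, let $y\in S^{2n+1}$ lie in the support of a tangent cone $T_\infty=\lim_k C_{x_0,r_k}$. Then $\|T_\infty\|(B_\rho(y))>0$ for every $\rho>0$, and by weak-$*$ convergence and lower-semicontinuity on the open ball, $\liminf_k\|C_{x_0,r_k}\|(B_\rho(y))\geq\|T_\infty\|(B_\rho(y))>0$. Hence for each $\rho=1/j$ there is an index $k_j$ with $\|C_{x_0,r_{k_j}}\|(B_{1/j}(y))>0$, so $\mathrm{supp}(C_{x_0,r_{k_j}})$ meets $B_{1/j}(y)$; pulling back through the dilation gives a point $z_j\in\Tc$ with $\big|\frac{z_j-x_0}{r_{k_j}}-y\big|<1/j$. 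Thus $\frac{z_j-x_0}{r_{k_j}}\to y$, and since $|y|=1$ this forces $\frac{|z_j-x_0|}{r_{k_j}}\to 1$, whence $z_j\neq x_0$ for $j$ large, $z_j\to x_0$ (as $r_{k_j}\to 0$), and $\frac{z_j-x_0}{|z_j-x_0|}\to y$. Setting $x_m:=z_m$ produces the desired sequence.

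The step I expect to be most delicate is the no-cancellation statement $\|C_{x_0,r_k}\|\rightharpoonup\|T_\infty\|$: weak convergence of currents alone gives only lower-semicontinuity of mass and would permit the mass measures to charge points off $\mathrm{supp}(T_\infty)$, breaking the matching of supports in both directions. It is precisely the positive-$(p,p)$ structure---the identity $\langle\Om_k,\vec{C}_{x_0,r_k}\rangle\equiv 1$, so that mass equals the action of a converging calibration---that rules this out. Care is also needed to confirm that the almost-monotonicity constant for the rescaled cycles is uniform in $k$; it is, since the Lipschitz constant of $\Om_k$ equals $r_k L\to 0$, so the lower mass bound at support points holds with a constant independent of $m$.
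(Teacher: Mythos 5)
Your proof is correct and follows essentially the same route as the paper's: blow up at the radii $r_m=|x_m-x_0|$, use the density lower bound $\nu\geq 1$ at support points together with almost-monotonicity (with the uniformity of the perturbation constant under rescaling that you rightly flag), and transfer the resulting mass lower bounds to the limit via the no-cancellation convergence of the mass measures, which is exactly the content of remark \ref{oss:contmass}. The only cosmetic difference is in the converse direction: you argue directly via lower semicontinuity of mass on open balls, while the paper proves the contrapositive (absence of such a sequence yields a conical neighbourhood of $y$ disjoint from the support of $T$, hence zero limiting mass near $y$); the two arguments are interchangeable.
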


\begin{oss}
\label{oss:contmass}
Let $C_k \rightharpoonup C_\infty$ be a sequence of $\phi_k$-semi-calibrated integral cycles ($k \in \N \cup\{\infty\}$), where $\phi_k$ are semi-calibrations with respect to the metrics $g_k$, and assume that the $\phi_k$ converge uniformly to $\phi_\infty$, $g_k$ converge uniformly to $g_\infty$ and the $C_k$'s have equibounded masses. Then $M(C_k \res B) \to M(C_\infty \res B)$ for any open set $B$. This follows because computing the mass for a semicalibrated current amounts to testing the current on the semi-calibration, so the convergence of the masses follows from the definition of weak-convergence of currents.
\end{oss}

\begin{oss}
Recall that, as a consequence of monotonicity, a point belongs to the support of a semi-calibrated integral cycle if and only if its density is $\geq 1$.
\end{oss}

\begin{proof}[\textbf{proof of lemma \ref{lem:ovvio}}]

The first statement follows by choosing the sequence of radii $r_m:=|x_m - x_0|$ and by looking at the sequence $T_{x_0, r_m}$. Up to a subsequence we may assume that $T_{x_0, r_m} \rightharpoonup T_\infty$. Each $x_m$ is of density $\geq 1$ for $T$ by assumption and, for any $m$, the point $\frac{x_m - x_0}{|x_m - x_0|}$ is of density $\geq 1$ for $T_{x_0, r_m}$. Since $\frac{x_m - x_0}{|x_m - x_0|} \to y$, analogously to remark \ref{oss:contmass} we can get $$M\left(T_{x_0, r_m} \res B_R\left(\frac{x_m - x_0}{|x_m - x_0|}\right)\right) \to M(T_\infty \res B_R(y))$$ for any $R>0$. By the almost monotonicity formula $M(T_\infty \res B_R(y)) \geq \alpha_{2p}R^{2p}$ and so $y$ is a point of density $\geq 1$ for $T_\infty$.

\medskip

Let now $y \in S^{2n+1}$. If there exists no sequence $x_m \neq x_0$ such that $x_m \in \Tc$, $x_m \to x_0$ and $\frac{x_m - x_0}{|x_m - x_0|} \to y$, then we can assume to have a ball $B^{2n+1}_a(y) \subset S^{2n+1}$ such that the cone $0 \sharp B^{2n+1}_a(y)$ is disjoint from $\Tc \cap B^{2n+2}_R(0)$, for some small $R>0$. But then, for any dilation $T_{x_0,r}$ with $r<R$ we have $M(T_{x_0,r} \res B_a^{2n+2}(y))=0$. Since the mass passes to the limit for convergence of semi-calibrated cycles (remark \ref{oss:contmass}), we  deduce that $y$ is a point of density $0$ for any limit of the family $T_{x_0,r}$, therefore it cannot appear as a point in the support of any tangent cone.
\end{proof}

In order to achieve lemma \ref{lem:uniqsupport}, it suffices, thanks to lemma \ref{lem:ovvio}, to analyze limits of $\frac{x_m - x_0}{|x_m - x_0|} \to y$ for $x_m \in \Tc$, $x_m \to x_0$. More precisely, recalling that each tangent cone is a holomorphic $(p,p)$-cone, if $y \in S^{2n+1}$ belongs to the support of a tangent cone $T_\infty$, then every point in the Hopf fiber $\{e^{i \theta} y\}_{\theta \in [0, 2 \pi)}$ is also a point whose density for $T_\infty$ equals that of $y$. In other words, if $y$ is in the support of $T_\infty$, so is the whole fiber $\{e^{i \theta} y\}_{\theta \in [0, 2 \pi)}$. Denote by $H:S^{2n+1} \to \CP^n$ the standard Hopf projection. Then, in order to prove lemma \ref{lem:uniqsupport}, we actually need to show the following

\begin{Prop}
\label{Prop:restate}
Let $T$ be a positive-$(p,p)$ integral cycle. Let $\{x_m\}$ be a sequence of points such that $x_m \in \Tc$ with $x_m \to x_0$, $x_m \neq x_0$ and $H\left(\frac{x_m - x_0}{|x_m - x_0|}\right) \to y \in \CP^n$. Then the support of any tangent cone to $T$ at $x_0$ must contain the Hopf circle $H^{-1}(y)$. 
\end{Prop}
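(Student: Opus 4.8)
The plan is to realise the heuristic \emph{algebraic blow up} analytically, in a form compatible with the almost complex structure, and to read off the support of every tangent cone from a single current living over $\CP^n$.

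\textbf{Setup.} I would work in normal coordinates at $x_0=0$, identifying $(T_{x_0}\Mc, J_{x_0})$ with $\C^{n+1}$ so that $J(0)=J_0$ and $g(0)$ is Euclidean. Let $\P$ be the blow-up map from (a neighbourhood of the zero section in) the total space of the tautological bundle $\mathcal O(-1)\to\CP^n$ onto a neighbourhood of $0$ in $\C^{n+1}$; write $E\cong\CP^n$ for the exceptional divisor (the zero section) and $\rho$ for the projection onto $\CP^n$. In the integrable case $\P$ is the classical blow up; in general I perturb it so that it intertwines the fibre dilations with the radial dilations of $\C^{n+1}$ and so that the strict transform of a $J$-complex object is again, up to controllable errors, $\rho$-horizontal. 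I then define the strict transform $\tilde T:=\overline{(\PI)_\ast\big(T\res(\C^{n+1}\setminus\{0\})\big)}$, an integral current in the blow-up space which away from $E$ is simply the pushforward of $T$.

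\textbf{Decoupling and uniqueness of $L_\infty$.} The radial dilation $x\mapsto x/r$ lifts through $\P$ to the fibre dilation $\delta_r$ of $\mathcal O(-1)$, which fixes $E$ pointwise and satisfies $\rho\circ\delta_r=\rho$. Consequently $\rho_\ast(\delta_r)_\ast\tilde T=\rho_\ast\tilde T$ for every $r$. On the other hand, the blow-ups $T_{0,r}$ converge (along suitable radii) to tangent cones $T_\infty$, and their strict transforms converge to $\tilde T_\infty$; since $T_\infty$ is a positive-$(p,p)$ \emph{cone}, its strict transform is fibre-dilation invariant, i.e. it is the cone over the holomorphic $(p-1,p-1)$ cycle $L_\infty\subset\CP^n$ and equals $\rho^{-1}(L_\infty)$ as a current. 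Fibre integration then gives $\rho_\ast\tilde T_\infty=L_\infty$. Passing $\rho_\ast$ through the weak limit (legitimate because $\rho$ is proper on the relevant compact region and the masses are equibounded by Proposition \ref{Prop:monotonicity}), I obtain $\rho_\ast\tilde T=L_\infty$ for \emph{every} tangent cone. Hence $L_\infty$, and a fortiori its support, does not depend on the chosen sequence of radii.

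\textbf{Detecting the direction $y$.} It remains to locate $y$ in $\mathrm{supp}(L_\infty)$. The lifts $\tilde x_m:=\PI(x_m)$ satisfy $\rho(\tilde x_m)=H\big(\tfrac{x_m-x_0}{|x_m-x_0|}\big)\to y$ and have fibre coordinate $|x_m-x_0|\to 0$, so $\tilde x_m\to e_y:=(0,y)\in E$; since each $x_m\in\Tc$ has density $\geq 1$, we have $\tilde x_m\in\mathrm{supp}(\tilde T)$, and by closedness $e_y\in\mathrm{supp}(\tilde T)$. Because $\tilde T$ is a \emph{positive} current and $\rho$ is a complex submersion, fibre integration produces no cancellation, so $y=\rho(e_y)\in\mathrm{supp}(\rho_\ast\tilde T)=\mathrm{supp}(L_\infty)$; the quantitative input is the almost-monotonicity bound $M(T\res B_\sigma(x_m))\geq \alpha_{2p}\sigma^{2p}(1-C\sigma)$ at the support points $x_m$, which guarantees that the mass carried near $e_y$ survives fibre integration at the scale of $y$. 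Since $y\in\mathrm{supp}(L_\infty)$ is equivalent to $\rho^{-1}(y)\subset\mathrm{supp}(\tilde T_\infty)$, i.e. to the whole Hopf circle $H^{-1}(y)$ lying in $\mathrm{supp}(T_\infty)$, and since this holds for every tangent cone, the Proposition follows.

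\textbf{Main obstacle.} The genuine difficulty is analytic and lives entirely in the non-integrability of $J$: since the perturbed $\P$ is not holomorphic, the strict transform $\tilde T$ is not automatically positive-$(p,p)$, its mass near $E$ is not a priori finite, and the fibre-dilation limits of $\tilde T$ need not be literal strict transforms of $T_\infty$. The heart of the argument is therefore to choose the perturbation of $\P$ so that (i) $\tilde T$ has locally finite mass up to $E$ with $\rho_\ast\tilde T$ well defined, (ii) positivity, hence absence of cancellation under $\rho_\ast$ and the validity of almost-monotonicity, is preserved modulo errors that are infinitesimal as one approaches $E$, and (iii) these errors vanish in the fibre-dilation limit, so that $\lim(\delta_r)_\ast\tilde T$ coincides with the strict transform of the corresponding tangent cone. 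Controlling these error terms — exactly the ``analysis issues'' alluded to in the abstract — is the almost-complex analogue of the clean algebraic fact that strict transform commutes with fibre projection, and is where I expect the bulk of the work to lie.
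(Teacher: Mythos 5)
Your setup, and the difficulties you flag at the end, coincide with the paper's machinery (the perturbed blow up, the strict/proper transform, the mass bounds near the exceptional divisor of lemma \ref{lem:normalposcycle}, and the identification of limits of proper transforms with proper transforms of tangent cones in lemma \ref{lem:pinfty}). The genuine gap is the step you call ``Decoupling and uniqueness of $L_\infty$'', and it is not a technicality. The operation $\rho_*$ on which your whole argument rests would need three properties at once: (a) it sends the strict transform of a cone $T_\infty$ to its link $L_\infty$; (b) it is fibre-dilation invariant, $\rho_*(\delta_r)_*=\rho_*$; (c) it is continuous under weak convergence of currents with equibounded mass. No operation has all three. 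If $\rho_*$ is the honest pushforward of currents (for which (b) holds by functoriality, since $\rho\circ\delta_r=\rho$, and (c) holds by properness), then (a) fails: pushforward preserves the dimension $2p$, while $L_\infty$ has dimension $2p-2$; concretely, the strict transform of a cone is $L_\infty\times\llbracket D^2\rrbracket$, whose tangent $2p$-planes contain the fibre directions on which $d\rho$ degenerates, so $\rho_*\tilde T_\infty=0$, not $L_\infty$. If instead $\rho_*$ means fibre integration, $S\mapsto\rho_*(S\res\beta)$ against a fixed smooth $2$-form $\beta$ with unit integral on each fibre disk (which does give (a) and (c)), then (b) fails because $\delta_r^*\beta\neq\beta$: already for $p=1$, testing against the function $\alpha\equiv 1$ with $\beta$ a normalization of $\Phi^*\om_0$, the quantity $\rho_*\bigl((\delta_r)_*\tilde T\res\beta\bigr)(1)$ is, up to a constant and errors vanishing as $r\to 0$, the mass ratio $M\bigl(T\res(\Sc\cap B_r)\bigr)/r^{2}$, which by Proposition \ref{Prop:monotonicity} is only \emph{almost}-monotone in $r$, certainly not constant in $r$ (it is constant precisely when $T$ is already a cone); and a genuinely $\delta_r$-invariant $\beta$ would have to scale like $|z|^{-2}$ along the fibres, hence be singular and non-integrable, destroying (a) and (c). So the chain $L_\infty=\rho_*\tilde T_\infty=\lim_n\rho_*\tilde P_n=\rho_*\tilde T$ breaks at one link or another, and with it your conclusion that $L_\infty$ is independent of the sequence of radii.

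There is also a structural reason why this step cannot be repaired: your argument never uses integrality (the bound $\nu\geq 1$ at the $x_m$ enters only to place $\Phi^{-1}(x_m)$ in $\mathrm{supp}(\tilde T)$, which is automatic since $\Phi$ is a diffeomorphism away from the origin). If the decoupling were sound, it would prove uniqueness of tangent cones, \emph{as currents}, for $\Om$-positive \emph{normal} cycles, contradicting Kiselman's counterexample \cite{Kis} recalled in the introduction; it would also prove all of theorem \ref{thm:mainrect} in one stroke, making lemmas \ref{lem:closedconn} and \ref{lem:contdensity} superfluous --- a warning sign. What the paper actually does, keeping exactly your setup, is a density argument in the blow-up space: the lifts under $\Phi^{-1}$ of support points of $T_{0,R_n}$ whose directions converge to $y$ accumulate at the point $(y,0)$ of the exceptional divisor; each such lift has density $\geq 1$ for the proper transform $\tilde P_n$ because $\tilde P_n$ is an \emph{integral} cycle semicalibrated by the Lipschitz form $\Theta_n$ (lemma \ref{lem:normalposcycle} together with lemma \ref{lem:lipcontrolI}), so almost-monotonicity and upper semicontinuity of the density apply in $\Ac$ and give density $\geq 1$ at $(y,0)$ for every $n$; finally, mass convergence for semicalibrated cycles (remark \ref{oss:contmass}) passes this bound to the limit $\tilde P_\infty=\Phi^{-1}_*\tilde T_\infty$ of lemma \ref{lem:pinfty}, so $(y,0)\in\mathrm{supp}(\tilde P_\infty)$, i.e. $y\in\mathrm{supp}(\tilde L_\infty)$ and $H^{-1}(y)\subset\mathrm{supp}(\tilde T_\infty)$. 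Note that this argument, unlike yours, claims only equality of \emph{supports}; the equality of multiplicities is genuinely harder and is the content of the rest of section \ref{proof}.
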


This proposition will be proved by employing an algebraic blow up of the semi-calibrated current $T$ around $x_0$. We now shortly recall the notations and the construction, which is developed in more detail in \cite{B2}.

\medskip

Since tangent cones to $T$ at a point $x_0$ are a local issue, we can assume straight from the beginning to work in the unit geodesic ball, in normal coordinates centered at $x_0$; for this purpose it is enough to start with the current $T$ already dilated enough around $x_0$. Always up to a dilation, without loss of generality we can actually start with the following situation.

$T$ is a $\Om$-positive normal cycle in the ball $B^{2n+2}_2(0)$, the coordinates are normal with respect to the origin, $J$ is the standard complex structure at the origin, $\om$ is the standard symplectic form at the origin, $\|\om - \om_0\|_{C^{2,\nu}}(B^{2n+2}_2)$, $\|\Om - \Om_0\|_{C^{2,\nu}}(B^{2n+2}_2)$ and $\|J - J_0\|_{C^{2,\nu}}(B^{2n+2}_2)$ are small enough.

\medskip

\textbf{How to blow up the origin.}
We shall be using standard coordinates $(z_0, z_1, ..., z_n)$ in $B^{2n+2}_2(0) \subset \C^{n+1} \cong \R^{2n+2}$ and the following notations as in \cite{B2}: $$\Sce:=\{(z_0, z_1, ... z_n) \in B_{1+\eps}^{2n+2} \subset \C^{n+1}: |(z_1, ..., z_n)| < (1+\eps)|z_0|\},$$

$$\Vce \subset \CP^n, \,\, \Vce :=\{[z_0, z_1, ..., z_n]:  |(z_1, ..., z_n)| < (1+\eps)|z_0|\}.$$

$$\text{for $X=[Z_1, ... , Z_{n+1}] \in \Vce$ } D^X \text{ is the ``straight'' $2$-plane }$$

$$\text{made of all points } \{\zeta (Z_1, ... Z_{n+1}): \zeta \in \C \}.$$

We also write $\Sc$ for ${\Sc}_0$ and $\Vc$ for ${\Vc}_0$. 

As shown in section 3 of \cite{B2}, by constructing (via a fixed point theorem) a \textit{pseudo-holomorphic polar foliation} we can produce an appropriate diffeomorphism 

\begin{equation}
\label{eq:psi}
 \Psi: \Sce \rightarrow \Psi(\Sce) \approx \Sce,
\end{equation}

which is close to the identity on $\Sce$, and which (by pulling-back the problem via $\Psi$) allows us to make an extra assumption on the almost complex structure $J$: namely the ``straight $2$-planes'' $D^X$ are $J$-pseudo holomorphic for all $X \in \Vce$. Figure 1 in \cite{B2} visually explains the behaviour of $\Psi$. 

\medskip

With this extra assumption on $J$, we can proceed to blow-up the origin of $\C^{n+1}$ as follows.

\medskip

\textit{Reminder}: \textit{algebraic blow-up} (from \textit{symplectic} or {\it algebraic geometry}, see \cite{MS}). Define $\widetilde{\C}^{n+1}$ to be the submanifold of $\CP^n \times \C^{n+1}$ made of the pairs $(\ell, (z_0, ... z_n))$ such that $(z_0, ... z_n) \in \ell$. 

Denote by $I_0$ the complex structure that $\widetilde{\C}^{n+1}$ inherits from $\CP^n \times \C^{n+1}$. Let $\Phi:\widetilde{\C}^{n+1} \rightarrow \C^{n+1}$ be the projection map $(\ell, (z_0, ... z_n)) \rightarrow (z_0, ... z_n)$. This map is holomorphic for the standard complex structures $J_0$ on $\C^{n+1}$ and $I_0$ on $\widetilde{\C}^{n+1}$ and is a diffeomorphism between $\widetilde{\C}^{n+1} \setminus \left( \CP^n \times \{0\} \right)$ and $\C^{n+1} \setminus  \{0\}$. Moreover the inverse image of $\{0\}$ is $\CP^n \times \{0\}$.

We will endow $\widetilde{\C}^{n+1}$ with other almost complex structures, different from $I_0$, so $\widetilde{\C}^{n+1}$ should be thought of just as an oriented manifold and the structure on it will be specified in every instance. The transformation $\Phi^{-1}$ (called \textit{proper transform}) sends the point $0 \neq (z_0, ... z_n) \in \C^{n+1}$ to the point\- $([z_0, ... z_n], (z_0, ... z_n)) \in \widetilde{\C}^{n+1} \subset \CP^n \times \C^{n+1}$. 

We will keep using the same letters $\P$ and $\Phi^{-1}$ to denote the same maps restricted to 

$$\Sce \subset B^{2n+2} \subset \R^{2n+2} \cong \C^{n+1} \text{ and } \Ace := \pt (\Sce) \subset \widetilde{\C}^{n+1},$$

also when we look at these spaces just as oriented manifolds (not complex ones). We will make use of the notation

$$\Scer :=\Sce \cap B_\rho^{2n+2} \subset \R^{2n+2} \cong \C^{n+1} \text{ and } \Acer := {\P}^{-1} (\Scer) \subset \widetilde{\C}^{n+1}.$$

\medskip

Let $g_0$ denote the standard metric\footnote{The standard metric on $\CP^n \times \C^{n+1}$ is the product of the Fubini-Study metric on $\CP^n$ and the flat metric on $\C^{n+1}$.} on ${\Ace}$ as a subset of $\widetilde{\C}^{n+1} \subset \CP^n \times \C^{n+1}$ and ${\vt}_0$ be the standard symplectic form on $\Ace$, uniquely defined by ${\vt}_0(\cdot, \cdot):= g_0(\cdot, -I_0 \cdot)$.

Define on ${\Ace} \,\setminus (\CP^n \times \{0\})$:

\begin{itemize}
 \item the almost complex structure $I := {\P}^* J$, $I(\cdot):= {\pt}_* J {\P}_*(\cdot)$,
 
 \item the metric $\G(\cdot, \cdot):= g_0(\cdot, \cdot) + g_0(I \cdot, I \cdot)$,
 
 \item the non-degenerate two-form $\vt(\cdot, \cdot):= \G(\cdot, -I \cdot)$.
\end{itemize}

The triple $(I, \G, \vt)$ makes ${\Ace} \setminus (\CP^n \times \{0\})$ an almost complex manifold and from \cite{B2} we have

\begin{lem}
\label{lem:lipcontrolI}
The triple $(I, \G, \vt)$ extended to ${\Ace}$ by setting it to be $(I_0, g_0, \vt_0)$ on $\Vce \times \{0\}$ is Lipschitz continuous on ${\Ace}$ and fulfils $$|I-I_0|(\cdot) \leq c \text{dist}_{g_0}(\cdot,\CP^n \times \{0\}),$$

$$|\G-g_0|(\cdot) \leq c \text{dist}_{g_0}(\cdot,\CP^n \times \{0\}),$$

$$|\vt-{\vt}_0|(\cdot) \leq c \text{dist}_{g_0}(\cdot,\CP^n \times \{0\}),$$ for some constant $c>0$, which is $o(1)$ of $|J-J_0|$. 
\end{lem}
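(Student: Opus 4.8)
The plan is to reduce the whole statement to the single pointwise estimate for $I-I_0$, and to prove that estimate in the standard blow-up chart, where the decisive ingredient is a cancellation forced by the pseudo-holomorphicity of the foliation $D^X$. First I would note that it suffices to establish $|I-I_0|(\cdot)\le c\,\mathrm{dist}_{g_0}(\cdot,\CP^n\times\{0\})$: once $I$ is known to be bounded and to converge to $I_0$ at the exceptional divisor at this linear rate, the bounds for $\G-g_0$ and $\vt-\vt_0$ follow purely algebraically from the definitions $\G(\cdot,\cdot)=g_0(\cdot,\cdot)+g_0(I\cdot,I\cdot)$ and $\vt(\cdot,\cdot)=\G(\cdot,-I\cdot)$, since these are smooth quadratic/bilinear expressions in $I$ with $g_0$ smooth; for instance $\G-g_0=g_0(I\cdot,I\cdot)-g_0(I_0\cdot,I_0\cdot)$ is controlled by $|I-I_0|\,(|I|+|I_0|)\lesssim|I-I_0|$. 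Because $\Vce$ lies in the affine chart $\{z_0\neq 0\}$ of $\CP^n$, I can work in a single blow-up chart with coordinates $(\zeta,w)=(\zeta,w_1,\dots,w_n)$, in which $\Phi(\zeta,w)=(\zeta,\zeta w_1,\dots,\zeta w_n)$, the exceptional divisor is $\{\zeta=0\}$, the region $\Ace$ is $\{|w|<1+\eps\}$, and $\mathrm{dist}_{g_0}(\cdot,\CP^n\times\{0\})\simeq|\zeta|$.

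In this chart I would compute the two differentials explicitly in the holomorphic frame. One finds that $\Phi_*$ contracts the directions tangent to the divisor by a factor $\zeta$, namely $\Phi_*\partial_\zeta=u:=\partial_{z_0}+\sum_j w_j\partial_{z_j}$ and $\Phi_*\partial_{w_j}=\zeta\,\partial_{z_j}$, while its inverse amplifies the transverse directions, $(\Phi^{-1})_*\partial_{z_j}=\tfrac1\zeta\partial_{w_j}$ and $(\Phi^{-1})_*\partial_{z_0}=\partial_\zeta-\sum_j\tfrac{w_j}\zeta\partial_{w_j}$; in particular the cancellation $(\Phi^{-1})_*u=\partial_\zeta$ holds. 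Setting $K:=J-J_0$, I have $I-I_0=(\Phi^{-1})_*\,K\,\Phi_*$, and since the coordinates are normal with $J(0)=J_0$ and $J\in C^{2,\nu}$, $|K|(z)\le L|z|\lesssim L|\zeta|$ on $\Ace$ with $L\lesssim\|J-J_0\|_{C^1}$. For a $g_0$-unit vector $v=a\partial_\zeta+\sum_j b_j\partial_{w_j}$ (so $|a|,|b|\lesssim1$) I would decompose $\Phi_*v=a\,u+\zeta\,t$, with $t=\sum_j b_j\partial_{z_j}$ tangent to the divisor, so that
$$(I-I_0)(v)=a\,(\Phi^{-1})_*K u+\zeta\,(\Phi^{-1})_*K t.$$

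The main obstacle, and the very reason the foliation $\Psi$ was constructed, lives in the first term: a priori $(\Phi^{-1})_*$ amplifies by $1/\zeta$, so the bound $|Ku|\lesssim|\zeta|$ only yields $O(1)$, i.e. continuity of $I$ but not the required linear decay. The key step is that the straight $2$-plane $D^X$ is holomorphic for both $J_0$ (it is a standard complex line) and $J$ (by construction of the pseudo-holomorphic polar foliation), so its tangent plane $TD^X=\R u\oplus\R J_0u=\R u\oplus\R Ju$ is invariant under both structures; hence $Ku=Ju-J_0u\in TD^X$. Since $(\Phi^{-1})_*u=\partial_\zeta$ and, $\Phi$ being $(J_0,I_0)$-holomorphic, $(\Phi^{-1})_*J_0u=I_0\partial_\zeta$, the vector $(\Phi^{-1})_*Ku$ lands in $\mathrm{span}\{\partial_\zeta,I_0\partial_\zeta\}$ \emph{without} the $1/\zeta$ amplification, giving $|(\Phi^{-1})_*Ku|\lesssim|Ku|\lesssim|\zeta|$. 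The second term is harmless: the explicit prefactor $\zeta$ absorbs the $1/\zeta$ coming from $(\Phi^{-1})_*$, so that $|\zeta|\cdot\tfrac1{|\zeta|}|Kt|\lesssim|Kt|\lesssim|\zeta|$. Combining, $|(I-I_0)(v)|\lesssim L|\zeta|\simeq\|J-J_0\|_{C^1}\,\mathrm{dist}_{g_0}(\cdot,\CP^n\times\{0\})$, which is exactly the claimed bound with constant $c\lesssim\|J-J_0\|_{C^1}$, hence $o(1)$ as $J\to J_0$.

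Finally, for the Lipschitz continuity on all of $\Ace$ I would differentiate the coordinate expression once more: away from the divisor $I=\Phi^{*}J$ inherits the $C^{2,\nu}$ regularity of $J$, and the only danger is again the $1/\zeta$ factors in $(\Phi^{-1})_*$ and in its derivatives. These are tamed exactly as above — the $J$-holomorphicity of the planes $D^X$ together with the quadratic control afforded by $J\in C^{2,\nu}$ and $J(0)=J_0$ keep all difference quotients of $I-I_0$ bounded up to $\{\zeta=0\}$, so that $I$ is Lipschitz across and along the divisor, where it matches $I_0$. The Lipschitz bounds for $\G$ and $\vt$ then follow from the algebraic relations and the smoothness of $g_0$. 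The step I expect to be most delicate to make fully rigorous near the divisor is precisely this cancellation of the $1/\zeta$ amplification, which rests entirely on the pseudo-holomorphicity of the foliation $D^X$.
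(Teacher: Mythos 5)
A preliminary but important remark: this paper contains no proof of Lemma \ref{lem:lipcontrolI} at all --- it is quoted from \cite{B2} (``from \cite{B2} we have''), so there is no internal proof to compare yours against. Judged on its own merits, your argument is correct and is, almost certainly, precisely the argument the cited construction is designed to support: the whole point of the diffeomorphism $\Psi$ of (\ref{eq:psi}) is to make the leaves $D^X$ pseudo-holomorphic for $J$, and you identify exactly where that hypothesis must be spent. Your reduction of the three estimates to the bound on $|I-I_0|$, the chart formulas $\Phi_\ast\partial_\zeta=u$, $\Phi_\ast\partial_{w_j}=\zeta\partial_{z_j}$, $(\Phi^{-1})_\ast u=\partial_\zeta$, and the key cancellation --- $Ku=Ju-J_0u\in TD^X=\text{span}_{\R}\{u,J_0u\}$ because \emph{both} structures preserve $TD^X$, while $(\Phi^{-1})_\ast$ restricted to $TD^X$ lands in $\text{span}_{\R}\{\partial_\zeta,I_0\partial_\zeta\}$ with no $1/|\zeta|$ amplification --- combined with $|K|\leq L|z|\lesssim \|J-J_0\|_{C^1}|\zeta|$ from normal coordinates, give the three displayed inequalities with $c\lesssim\|J-J_0\|_{C^1}$, which is the meaning of ``$c$ is $o(1)$ of $|J-J_0|$''.

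Two caveats. First, the Lipschitz continuity up to the divisor is the one part you assert rather than prove, and it does not follow from re-running the pointwise estimate ``once more'' without further structure: writing $I-I_0=M^{-1}(K\circ\Phi)M$ with $M=D\Phi$, the naive bound on the term $M^{-1}\,\partial(K\circ\Phi)\,M$ is only $O(1/|\zeta|)$. What actually saves the derivative bound is the block structure of the matrix of $I-I_0$ in the frame $(\partial_\zeta,\partial_{w_j})$: the column $(I-I_0)\partial_\zeta$ has \emph{identically vanishing} $\partial_w$-components (again because $Ku\in TD^X$), and in the columns $(I-I_0)\partial_{w_j}=\zeta(\Phi^{-1})_\ast K\partial_{z_j}$ every factor $1/\zeta$ or $1/\bar\zeta$ --- whose differentiation produces the dangerous $1/|\zeta|$ --- is multiplied either by the explicit prefactor $\zeta$ or by entries of $K\circ\Phi$, which are $O(|\zeta|)$; this uses $|DK|$ bounded, i.e.\ $J\in C^2$, which is available. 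Spelling this out is needed to make the Lipschitz claim rigorous. Second, a normalization quibble which is the paper's rather than yours: with the definitions as literally stated, $\G\to 2g_0$ and $\vt\to 2\vt_0$ at $\Vce\times\{0\}$ (since $g_0(I_0\cdot,I_0\cdot)=g_0$), so your identity $\G-g_0=g_0(I\cdot,I\cdot)-g_0(I_0\cdot,I_0\cdot)$ implicitly uses the corrected normalization $\G=\tfrac12\left(g_0(\cdot,\cdot)+g_0(I\cdot,I\cdot)\right)$; with that (or with $2g_0,2\vt_0$ as the boundary values) your algebraic reduction is fine.
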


Set $\Th:=\frac{1}{p !}\vt^p$ on $\Ace$. The aim is now to translate our original problem to the new space $({\Ac}, I, \G, \vt)$. For any $\rho>0$ we can take the proper transform of $T \res (\Sce \setminus \Scer )$, since $\pt$ is a diffeomorphism away from the origin:

$$P_{\rho}:=  {\pt}_* \left(T \res (\Sce \setminus \Scer )\right).$$

The current $P_\rho$ is clearly positive-$(p,p)$ in $({\Ac}, I, \G, \vt)$. What happens when $\rho \to 0$ ? Here is the answer.

\begin{lem}
\label{lem:normalposcycle}
\textbf{(i)} The current $P:=\lim_{\rho \to 0}  P_\rho = \lim_{\rho \to 0} {\pt}_* \left(T \res (\Sce \setminus \Scer)\right)$ is well-defined as the limit of currents of equibounded mass. The mass of $P$ (both with respect to $\G$ and to $g_0$) is bounded by a dimensional constant $C$ times the mass of $T$.

\textbf{(ii)} $P$ it is an integral \underline{cycle} in $\Ac$ and it is semi-calibrated by $\Th$. 
\end{lem}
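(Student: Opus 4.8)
The plan is to prove \textbf{(i)} and \textbf{(ii)} by first establishing a uniform-in-$\rho$ bound $M(P_\rho)\le C\,M(T)$, and then reading off the structure of the limit $P$ from the way mass accumulates near the exceptional divisor $E:=\CP^n\times\{0\}$. For the mass bound, since $\PI$ is a diffeomorphism of $\Sce\setminus\{0\}$ onto $\Ace\setminus E$, I would write $M_{g_0}(P_\rho)=\int_{\Sce\setminus\Scer}J^{\vec T}_{\PI}\,d\|T\|$, where $J^{\vec T}_{\PI}$ is the Jacobian of $\PI$ restricted to the complex approximate tangent plane $\vec T$, computed with the flat metric on $\Sce$ and $g_0$ on $\Ace$. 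In the chart $(z_0,\dots,z_n)\mapsto(z_0,z_1/z_0,\dots,z_n/z_0)$ the metric $g_0$ is comparable to the Euclidean one on $\Vce$, and the essential geometric fact is that $\PI$ leaves the \emph{radial complex line} $\C z=\mathrm{span}_\R(z,J_0z)$ essentially undistorted, while expanding the directions transverse to it by a factor $\approx|z|^{-1}$. A complex $2p$-plane thus behaves, to leading order, like $\C z$ together with $p-1$ transverse complex directions, and one is led to a pointwise estimate of the form
\[
J^{\vec T}_{\PI}\ \le\ C\left(\frac{1}{|z|^{2p-2}}+\frac{|z^\perp|^2}{|z|^{2p+2}}\right),
\]
where $z^\perp$ is the component of the position vector normal to $\vec T$: a plane containing $\C z$ produces only the first term, and the deviation from radiality produces the second. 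Decomposing $\Sce\cap B_1$ into dyadic annuli, the bound $\|T\|(B_r)\le C\,r^{2p}$ coming from almost-monotonicity (Proposition \ref{Prop:monotonicity}) makes $\int|z|^{-(2p-2)}\,d\|T\|$ a convergent geometric series bounded by $C\,M(T)$, while $\int|z|^{-(2p+2)}|z^\perp|^2\,d\|T\|$ is precisely the defect term of the monotonicity identity, hence finite and again $\le C\,M(T)$. This gives $\sup_\rho M_{g_0}(P_\rho)\le C\,M(T)$, and the same bound for $M_{\G}$ follows since $|\G-g_0|$ is small by Lemma \ref{lem:lipcontrolI}.

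\medskip

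For \textbf{(i)}, note that for $\rho'<\rho$ the two transforms agree on $\PI(\Sce\setminus\Scer)$, so $P_{\rho'}-P_\rho=\pt_*\big(T\res(\Scer\setminus\mathcal S_{\varepsilon,\rho'})\big)$ is carried near $E$ and has mass equal to the tail of the series above; hence $\{P_\rho\}$ is Cauchy in mass and converges to a current $P$ with $M(P)\le C\,M(T)$. Each $P_\rho$ is integral, being the image of an integral current under a diffeomorphism. Letting $\rho\downarrow0$ along radii for which the sphere slice $\langle T,|z|,\rho\rangle$ has controlled mass (coarea), the masses $M(\p P_\rho)$ stay equibounded, so the Federer--Fleming closure theorem gives that $P$ is integer multiplicity rectifiable.

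\medskip

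For the remaining parts of \textbf{(ii)}, since $\p T=0$ the boundary $\p\big(T\res(\Sce\setminus\Scer)\big)$ is carried by the inner sphere $\{|z|=\rho\}$ and the outer piece $\p\Sce$. The push-forward of the inner slice is supported in $\PI(\{|z|=\rho\})$; its tangent planes contain the angular direction $\R J_0z$, which $\PI$ does not distort, plus a transverse part of real dimension $2p-2$ expanded by $|z|^{-1}$, so along good radii its mass is $\lesssim\rho^{2p-1}\cdot\rho^{-(2p-2)}=\rho\to0$. Testing $\p P$ against any form supported away from $\p\Sce$ and passing to the limit along such radii annihilates the inner contribution, giving $\p P=0$ in the interior, so $P$ is a cycle. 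Finally, $\Phi$ is holomorphic for the pair $(I,J)$ by the very definition $I=\P^*J$, hence $\PI$ sends the $J$-complex planes $\vec T$ to $I$-complex planes and each $P_\rho$ is positive-$(p,p)$ for $I$, i.e. calibrated by $\Th$. Since the masses converge (remark \ref{oss:contmass}) and $\Th$ is a Lipschitz semi-calibration of comass one by Lemma \ref{lem:lipcontrolI}, the equality $P(\Th)=M(P)$ passes to the limit and $P$ is $\Th$-semi-calibrated.

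\medskip

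The step I expect to be the main obstacle is the uniform mass bound. The naive operator-norm estimate only yields $J^{\vec T}_{\PI}\lesssim|z|^{-2p}$, whose integral against $\|T\|(B_r)\le C r^{2p}$ diverges logarithmically; the argument genuinely needs both that $\PI$ preserves the radial complex direction (gaining two dimensions' worth of distortion) and that the residual non-radial part is controlled by the finite defect term of monotonicity. Turning this heuristic into a clean pointwise Jacobian inequality, uniformly over all admissible complex $2p$-planes and for the perturbed data $(I,\G,\vt)$ of Lemma \ref{lem:lipcontrolI} rather than the flat model, is the technical core of the lemma.
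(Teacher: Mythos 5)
Your part (i) runs on a genuinely different engine than the paper's. The paper never estimates a Jacobian pointwise: it tests $P_\rho$ against the semicalibration, splits $\Th_0$ via $\vt_0=\vt_{\CP^n}+\vt_{\C^n}$, and handles the dangerous term $(\vt_{\CP^n})^p$ by writing $\pt^*\vt_{\CP^n}=d\eta$ and integrating by parts (Stokes), so that everything reduces to the mass of good slices $M(\langle T,|z|=\rho_k\rangle)\lesssim\rho_k^{2p-1}$ plus dyadic comass estimates for the mixed terms and for $\Th-\Th_0$. You instead propose a pointwise Jacobian inequality whose non-radial excess is summed against the monotonicity defect. That inequality is in fact correct in the flat model: for a $J_0$-complex $2p$-plane making angle $\theta$ with the radial complex line one gets $J^{\vec T}_{\PI}\lesssim|z|^{-(2p-2)}+\sin^2\theta\,|z|^{-2p}$ with $\sin\theta=|z^\perp|/|z|$, and the $J$-versus-$J_0$ discrepancy ($|J-J_0|\lesssim|z|$ in normal coordinates) contributes $O(|z|^2)\,|z|^{-2p}$, which is absorbed into the first term; also, the defect integral is finite for semicalibrated (not just calibrated) cycles up to the harmless $O(r)$ error of almost-monotonicity. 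Your conclusion that $\{P_\rho\}$ is Cauchy in mass norm is even stronger than the weak convergence the paper extracts. But you concede that the Jacobian inequality itself is unproven and is "the technical core"; since all of part (i) rests on it, this part is an announced strategy rather than a complete proof.

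The genuine error is in part (ii). You claim that the tangent planes of the slice $\langle T,|z|=\rho\rangle$ "contain the angular direction $\R J_0z$" and deduce $M(\p P_\rho)\lesssim\rho^{2p-1}\cdot\rho^{-(2p-2)}=\rho\to0$. That pointwise claim is false for a general semicalibrated current: it is a property of \emph{cones}. What is true is that the slice tangent contains $Jv_1$, where $v_1$ is the direction in $\vec T$ closest to radial, and $Jv_1$ makes angle $\approx|z^\perp|/|z|$ with the angular direction; the correct slice Jacobian bound is therefore $\lesssim\rho^{-(2p-2)}+\bigl(|z^\perp|/|z|\bigr)\rho^{-(2p-1)}$, whose second piece is not pointwise small. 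One can repair your argument — by Cauchy--Schwarz, coarea, and the vanishing of the tail $\int_{B_s}|z^\perp|^2|z|^{-2p-2}\,d\|T\|\to0$ one finds radii along which $M(\p P_{\rho_k})=o(1)$ — but you have not made that argument, and it is exactly here that the paper proceeds differently: it never claims $M(\p P_{\rho_k})\to0$, only equiboundedness, and then kills the limiting boundary by rescaling, using that $(\La_{\rho_k})_*(\p P_{\rho_k})\rightharpoonup-\pt_*\langle T_\infty,|z|=1\rangle$ where $T_\infty$ \emph{is} a cone (so its link is Hopf-invariant, annihilating horizontal test forms), while the vertical components of test forms scale away like $\rho_k^{\,j}$ under $(\La_{\rho_k}^{-1})^*$. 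In short: your defect-term route for the mass bound is viable but unestablished at its core, and your boundary argument, as written, fails at a concrete step — the Hopf-invariance you attribute to the slices of $T$ belongs to the blow-up limit, not to $T$ itself.
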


\medskip

The proof is analog to the one in \cite{B2}, but we need to take care of the fact that the dimension of the current is higher. With the same notations as in \cite{B2}, for any $\rho$ consider the dilation $\lambda_\rho(\cdot):=\frac{\cdot}{\rho}$, sending $B_\rho$ to $B_1$, and the map 

\be
\label{eq:dilblow}
\Lambda_\rho:\Acr \to \Ac, \;\; \Lambda_\rho:=\Phi^{-1} \circ \lambda_\rho \circ \Phi ,
\ee

which in the coordinates of $\CP^n \times \C^{n+1}$ (the ambient space in which $\Ac$ is embedded) reads $\Lambda_\rho (\ell, z) = \left(\ell, \frac{z}{\rho}\right)$.

\begin{proof}[\textbf{proof of lemma \ref{lem:normalposcycle} (i)}]
Recall that we have a uniform bound $M(T_{0,r}) \leq K$, for a constant $K$ independent of $r$.

Each $P_\rho = {\pt}_* \left(T \res (\Sce \setminus \Scer)\right)$ is $\Th$-positive by construction, so $M(P_\rho)=P_\rho(\Th)$, where the mass is computed here with respect to $\G$. In order to study the limit as $\rho \to 0$, it is enough to look at an arbitrary sequence $\rho_n \to 0$ and prove that $P_{\rho_n}$ have equibounded masses and thus converge to a limit $P$, which must then be the limit of the whole family $P_\rho$.

\textit{1st step: choice of the sequence}. Denote by $\langle T, |z|=r\rangle$ the slice of $T$ with the sphere $\p B_r$. Choose $\rho_k$ so to ensure

\begin{itemize}
 \item (i) $T_{\rho_k} \rightharpoonup T_\infty$ in ${\Sc}$ for a certain cone $T_\infty$,
 
 \item (ii) $M(\langle T_{\rho_k}, |z|=1 \rangle)$ are equibounded by $4K$, 
 
 or, equivalently, $M \left( \langle T, |z|=\rho_k \rangle \right) \leq 4 K \rho_k^{2p-1}$.
\end{itemize}

This is just like step 1 of lemma 4.2 in \cite{B2}.

\medskip

\textit{2nd step: uniform bound on the masses}. We use in $\Ac$ standard coordinates inherited from $\CP^n \times \C^{n+1}$, i.e. we have $2n$ horizontal variables (from $\CP^n$) and $2n+2$ vertical variables. We want to estimate $M(P_\rho)=P_\rho(\Th) = P_\rho(\Th_0)+P_\rho(\Th-\Th_0)$, where $\Th_0:=\frac{1}{p !}\vt_0^p$ on $\Ac$. From lemma \ref{lem:lipcontrolI} we get that $|\Th-\Th_0|(p) < c \;\text{dist}_{g_0}(p,\CP^n \times \{0\})$ (we keep denoting the constant by $c$, although it is generally different that the one in lemma \ref{lem:lipcontrolI}; what is important is that it is still controlled by $|J-J_0|$).

\medskip

Recall that $\Th_0:=\frac{1}{p !}\vt_0^p$; the domain $\Sce$ is a product $\CP^n \times \C$ and therefore the standard form $\vt_0$ is $\vt_{\CP^n} + \vt_{\C^n}$, where $\vt_{\CP^n}$ is the standard symplectic form on $\CP^n$ extended to $\Sce$ (so independent of the two ``vertical variables'') and $\vt_{\C^n}$ is the symplectic two-form on $\C^n$, extended to $\Sce$ (so independent of the ``horizontal variables'').

Taking the $p$-th wedge power we get 

\be
\label{eq:wedgeproducts}
\Th_0:=\frac{1}{p !}\vt_0^p = \frac{1}{p !}(\vt_{\CP^n})^p + \sum_{m=1}^{p} (\vt_{\CP^n})^{p-m} (\vt_{\C})^m .
\ee

\medskip 

Let us first estimate $|P_\rho((\vt_{\CP^n})^p)|$. 

${\pt}^*(\vt_{\CP^n})=\p \overline{\p} \log \left( 1 + \sum_{j=1}^n \frac{|z_j|^2}{|z_0|^2} \right)$ in the domain $\Sce$ where $z_0 \neq 0$. In particular 

$${\pt}^*(\vt_{\CP^n}) = d \eta, \text{ where } \eta=\frac{1}{2}\left( \overline{\p} \log \left( 1 + \sum_{j=1}^n \frac{|z_j|^2}{|z_0|^2} \right) - \p \log \left( 1 + \sum_{j=1}^n \frac{|z_j|^2}{|z_0|^2} \right)\right).$$

We thus have 

$$P_\rho((\vt_{\CP^n})^p) =  \left(T \res (\Sce \setminus \Scer)\right)({\pt}^* (\vt_{\CP^n})^p) = \frac{1}{p !}\left(T \res (\Sce \setminus \Scer)\right)( d \eta)^p =$$

$$=\frac{1}{p !}\p\left[T \res (\Sce \setminus \Scer)\right] \left[\eta \wedge (d \eta)^{p-1}\right].$$

The boundary of $T \res (\Sce \setminus \Scer)$ is made of three portions: two live in the spheres $\p B_1$ and $\p B_\rho$ and the third one is given by the slice with a hypersurface of the form $\sum_{j=1}^n \frac{|z_j|^2}{|z_0|^2} = \text{const}$. The explicit form of $\eta$ then implies that this latter portion of boundary has zero action on $ \eta \wedge (d  \eta)^{p-1}$. We can thus write

$$P_\rho((\vt_{\CP^n})^p) =  \langle (T \res \Sce), r=1 \rangle  \left[\eta \wedge (d  \eta)^{p-1}\right] -  \langle (T \res \Sce), r=\rho \rangle \left[ \eta \wedge (d  \eta)^{p-1}\right].$$

Now observe the comass of $ \eta \wedge (d  \eta)^{p-1}$ on the spheres $\p B_\rho$. The comasses are equivalent up to a universal constant $C(p,n)$ to the maximum modulus of the coefficients.

For $\eta$ we can explicitly compute $|\eta| \leq \frac{K}{\rho}$ and $|d \eta| \leq \frac{K}{\rho^2}$ on $\p B_\rho$.

Now we focus on the sequence $\rho_k$ chosen in step 1, for which it holds $M\left \langle (T \res \Sce), r=\rho_k \rangle \right) \leq 4K \rho_k^{2p-1}$. We thus get $P_{\rho_k}(\Th_0) \leq K(p,n)$ independently of $\rho_k$.

\medskip

We pass now to estimating the other wedge products $\sum_{m=1}^{p} (\vt_{\CP^n})^{p-m} (\vt_{\C^n})^m$ left from (\ref{eq:wedgeproducts}); the key observation is that ${\pt}^*(\vt_{\C^n})$ has unit comass, and therefore the forms ${\pt}^*\left((\vt_{\CP^n})^{p-m} (\vt_{\C^n})^m\right)$ for $m \in \{1, ..., p\}$ all have comasses bounded by $\frac{K}{\rho^{2p-2}}$, where $\rho$ is the distance from the origin and $K$ is a universal constant. We then argue using a dyadic decomposition for the estimate on $|P_\rho(\Th-\Th_0)|$, as follows.

Break up $\Sce = \cup_{j=0}^\infty A_j$, where $A_j = \Sce \cap \left(B_{\frac{1}{2^j}} \setminus B_{\frac{1}{2^{j+1}}}\right)$. It holds $M(T \res A_j) \leq K \frac{1}{2^{2pj}}$. On the other hand, in the same annulus $A_j$, the comass of the form ${\pt}^*\left(\sum_{m=1}^{p} (\vt_{\CP^n})^{p-m} (\vt_{\C})^m\right)$ is $\leq K(p,n)\, 2^{2(p-1)(j+1)}$, for a constant $K(p,n)$ which only depends on the dimensions involved.

Therefore summing on all $j$'s we can bound 

$$\left|P_\rho\left(\sum_{m=1}^{p} (\vt_{\CP^n})^{p-m} (\vt_{\C})^m\right)\right|= \left|(T \res \Sce)\left({\pt}^* (\sum_{m=1}^{p} (\vt_{\CP^n})^{p-m} (\vt_{\C})^m)\right)\right|  \leq $$ $$\leq K(p,n) \sum_{j=0}^\infty  2^{2(p-1)(j+1)} \frac{1}{2^{2pj}} = K(p,n) \sum_{j=0}^\infty 2^{2p-2-2j} < \infty,$$

therefore $\left|P_\rho\left(\sum_{m=1}^{p} \vt_{\CP^n})^{p-m} (\vt_{\C^n})^m\right)\right|$ is also equibounded independently of $\rho$.

\medskip

To conclude the proof of part (i) of lemma \ref{lem:normalposcycle}, we must still prove that $|P_\rho(\Th-\Th_0)|$ is finite. Thanks to the Lipschitz control on $\vt-\vt_0$, which also yields $|\Th-\Th_0|(\cdot) \leq   c \text{dist}_{g_0}(\cdot,\CP^n \times \{0\})$, the form ${\pt}^* (\Th-\Th_0)$ in $\Sce$ has comass $\leq \frac{K}{\rho^{2p-1}}$, where $\rho$ is the distance from the origin. Arguing with a dyadic decomposition as done above, we find that also $|P_\rho(\Th-\Th_0)|$ is bounded independently of $\rho$.

\medskip

We have thus obtained that $M(P_{\rho_n})$ are uniformly bounded as $\rho_n \to 0$ and therefore there exists a current $P$ in $\Ace$ such that $P_{\rho} \rightharpoonup P$.

\end{proof}

\begin{proof}[\textbf{proof of lemma \ref{lem:normalposcycle} (ii)}]
\textit{Step 1}. Let us think of $P$ and $P_{\rho}:= {\pt}_*\left(T \res (\Sc \setminus \Scr)\right)$ as currents in the open set $\Ac$ in the manifold $\widetilde{\C}^{n+1}$. Given a sequence $\rho_k \to 0$, we want to observe the boundaries $\p P_{\rho_k}$. Up to a subsequence we may assume that $\rho_k$ is such that $T_{0, \rho_k} \rightharpoonup T_\infty$ for a certain cone. Then the boundaries $\p P_{\rho_k}$ satisfy, as  $k \to \infty$, by the definition (\ref{eq:dilblow}) of $\La_{\rho_k}$:

\be
\label{eq:boundariesdilblow}
(\Lambda_{\rho_k})_*(\p P_{\rho_k}) =  -{\pt}_* \langle T_{0, \rho_k}, |z|=1 \rangle \rightharpoonup -{\pt}_* \langle T_\infty, |z|=1 \rangle .
\ee

Recall that we are viewing $P_{\rho_k}$ as currents in the open set $\Ac$, so also $T \res (\Sc \setminus \Scr)$ should be thought of as a current in the open set $\Sc$: this is why the only boundary comes from the slice of $T$ with $|z|=\rho_k$.

\medskip

Moreover if the sequence is chosen (and we will do so) as in the 1st step of the proof of lemma \ref{lem:normalposcycle} (i), then $(\Lambda_{\rho_k})_*(\p P_{\rho_k})$ have equibounded masses, since so do the $\p (T_{0, \rho_k})$'s and $\P^{-1}$ is a diffeomorphism on $\p B_1$.

The current $T_\infty$ has a special form: it is a positive-$(p,p)$-cone, so the $(2p-1)$-current $\langle T_\infty, |z|=1 \rangle$ has an associated $(2p-1)$-vector field that always contains the direction tangent to the Hopf fibers\footnote{The Hopf fibration is defined by the projection $H: S^{2n+1} \subset \C^{n+1} \rightarrow \CP^n$, $H(z_0, ..., z_n) = [z_0, ..., z_n]$. The Hopf fibers $H^{-1}(p)$ for $p \in \CP^n$ are maximal circles in $S^{2n+1}$, namely the links of complex lines of $\C^{n+1}$ with the sphere.} of $S^{2n+1}$. 

\medskip

\textit{Step 2}. We want to show that $P$ is a cycle in $\Ac$, i.e. that $\p P_{\rho_k} \rightharpoonup 0$ as $n \to \infty$. The boundary in the limit could possibly appear on $\CP^n \times \{0\}$ and we can exclude that as follows.

Let $\alpha$ be a $(2p-1)$-form of comass one with compact support in $\Ac$ and let us prove that $\p P_{\rho_k}(\alpha) \to 0$. Since $\Ac$ is a submanifold in $\CP^n \times \C^{n+1}$, we can extend $\alpha$ to be a form in $\CP^n \times \C^{n+1}$. Let us write, using horizontal coordinates $\{t_j\}_{j=1}^{2n}$ on $\CP^n$ and vertical ones $\{s_j\}_{j=1}^{2n+2}$ for $\C^{n+1}$, 

$$\alpha=\alpha_h + \alpha_{v1} + \alpha_{v2} + ... \alpha_{v(2p-1)} ,$$

 where $\alpha_h$ is a form only in the $d t_j$'s, and each $\alpha_{vj}$ (for $j=1,2, ... , (2p-1)$) contains wedge products of $(2p-1-j)$ of the $d t_j$'s and $j$ of the $d s_j$'s. Rewrite, viewing $P_{\rho_n}$ as currents in $\CP^n \times \C^{n+1}$,

$$\p P_{\rho_k}(\alpha) = \left[(\Lambda_{\rho_k})_*(\p P_{\rho_k})\right]\left( \Lambda_{\rho_k}^{-1})^* \alpha \right).$$

The map $\Lambda_{\rho_k}^{-1}$ is expressed in our coordinates by $(t_1, ..., t_{2n}, s_1, ...s_n) \to (t_1, ..., t_{2n}, \rho_k s_1, ... \rho_k s_{2n+2})$, therefore 

$$(\Lambda_{\rho_k}^{-1})^* \alpha = \alpha^n_h + \alpha^n_{v1} + \alpha^n_{v2} + ... \alpha^n_{v(2p-1)} ,$$

where the decomposition is as above and with $\|\alpha^n_h\|^* \approx \|\alpha_h\|^*$ and $\|\alpha^n_{vj}\|^* \lesssim (\rho_k)^j \|\alpha_v\|^*$. The signs $\approx$ and $\lesssim$ mean respectively equality and inequality of the comasses up to a dimensional constant, so independently of the index $n$ of the sequence. 

\medskip

As $k \to \infty$ it holds $\alpha^k_h \to \alpha^\infty_h$ in some $C^\ell$-norm, where $\|\alpha^\infty_h\|^* \lesssim 1$ and $\alpha^\infty_h$ is a form in the $d t_j$'s \footnote{More precisely $\alpha^\infty_h$ coincides with the restriction of $\alpha_h$ to $\CP^n \times \{0\}$, extended to $\CP^n \times \C^{n+1}$ independently of the $s_j$ variables.}. We can write

$$\left|\left[(\Lambda_{\rho_k})_*(\p P_{\rho_k})\right]( \alpha^k_h)\right| \leq \left|\left[(\Lambda_{\rho_k})_*(\p P_{\rho_k})\right]( \alpha^k_h - \alpha^\infty_h)\right| + \left|\left[(\Lambda_{\rho_k})_*(\p P_{\rho_k})\right]( \alpha^\infty_h)\right| $$

and both terms on the r.h.s. go to $0$. The first, since $M((\Lambda_{\rho_k})_*(\p P_{\rho_k}))$ are equibounded and $|\alpha^k_h - \alpha^\infty_h|  \to 0$; the second because we can use (\ref{eq:boundariesdilblow}) and ${\pt}_* \p (T_\infty)$ has zero action on a form that only has the $d t_j$'s components, as remarked in step 1.

\medskip

Moreover 

$$\left|\left[(\Lambda_{\rho_k})_*(\p P_{\rho_k})\right]( \alpha^k_{vj})\right|  \to 0$$

for any $j \in \{1,2, ... (2p-1)\}$, because the currents $(\Lambda_{\rho_k})_*(\p P_{\rho_k}) =  -{\pt}_* \langle T_{0, \rho_k}, |z|=1 \rangle$ have equibounded masses by the choice of $\rho_k$, while the comasses $\|\alpha^k_{vj}\|^* \lesssim (\rho_k)^j \|\alpha_v\|^*$ go to $0$.

\medskip

Therefore no boundary appears in the limit and $P$ is an integral cycle in $\Ac$. The fact that it is semi-calibrated by $\Th$ follows easily by the fact that so are the currents $P_\rho$, as remarked just before lemma \ref{lem:normalposcycle}.

\end{proof}

\medskip

Summarizing, we are now able to take the proper transform of an integral cycle $T$ semi-calibrated by $\Om$ in $\Sce \subset B_1^{2n+2}$ and get an integral cycle $P$ in $\Ace$ that is semi-calibrated by $\Th$, where the semicalibration $\Th$ is Lipschitz (and actually smooth away from $\CP^n \times \{0\}$). Therefore the almost monotonicity formula holds true for $P$ in $\Ace$.

\section{Proof of the results}
\label{proof}

With the assumptions in proposition \ref{Prop:restate} we have to observe a sequence $(\la_{r_n})_* T$ as $r_n \to 0$. Recall that we have assumed (see (\ref{eq:psi})) that the ``straight'' $2$-planes $D^X$ are pseudo-holomorphic for $J$.

\medskip

Take any converging sequence $T_{0,r_n}:=(\la_{r_n})_* T \to T_\infty$ for $r_n \to 0$. Take the proper transform of each $T_{0,r_n}$ and denote it by $P_n$. Remark that $P_n$ is a $\Th_n$-semi-calibrated cycle, for a semicalibration $\Th_n$ that is smooth away from $\CP^n \times \{0\}$ and Lipschitz-continuous, with $|\Th_n-\Th_0| < c_n \text{dist}_{g_0}(\cdot, \CP^n \times \{0\})$ and the constants $c_n$ go to $0$ as $n \to \infty$ (lemma \ref{lem:lipcontrolI}).

From lemma \ref{lem:normalposcycle}, the masses of $P_n$ are uniformly bounded in $n$, since so are the masses of $T_{0,r_n}$ (by almost-monotonicity). 

So by compactness, up to a subsequence that we do not relabel, we can assume $P_n \rightharpoonup P_\infty$ for a normal cycle $P_\infty$. 

\medskip

\begin{lem}
 \label{lem:pinfty}
 $P_\infty$ is a $\Th_0$-semi-calibrated cycle; more precisely it is the proper transform of $T_\infty$.
\end{lem}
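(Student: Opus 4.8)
The plan is to let $P(T_\infty)$ denote the proper transform of the cone $T_\infty$: since $T_\infty$ is calibrated by the standard $\Om_0=\frac{1}{p!}\om_0^p$, Lemma \ref{lem:normalposcycle} applied to $T_\infty$ (with the standard structures $I_0,g_0,\vt_0$) produces a well-defined integral $\Th_0$-semi-calibrated cycle $P(T_\infty)$. Setting $E:=\CP^n\times\{0\}$ for the exceptional divisor, I would prove the statement in three steps: \textbf{(a)} $P_\infty$ is $\Th_0$-semi-calibrated; \textbf{(b)} $P_\infty$ and $P(T_\infty)$ coincide on $\Ace\setminus E$; \textbf{(c)} neither current charges $E$. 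Granting these, (b) and (c) force $P_\infty=P(T_\infty)$, because two integral currents that agree on $\Ace\setminus E$ and give no mass to the closed set $E$ are equal, while (a) records the semi-calibration.

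For (a) I would use that each $P_n$ is $\Th_n$-semi-calibrated, so $M(P_n)=P_n(\Th_n)=P_n(\Th_0)+P_n(\Th_n-\Th_0)$. The first term tends to $P_\infty(\Th_0)$ by weak convergence (against the fixed smooth form $\Th_0$, cut off to compact support), while by Lemma \ref{lem:lipcontrolI} one has $|\Th_n-\Th_0|(\cdot)\le c_n\,\text{dist}_{g_0}(\cdot,E)$ with $c_n\to0$, hence $\|\Th_n-\Th_0\|^*\le C c_n$ on the bounded set $\Ace$ and $|P_n(\Th_n-\Th_0)|\le M(P_n)\,C c_n\to0$ by the uniform mass bound. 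Thus $P_n(\Th_n)\to P_\infty(\Th_0)$, and combining with lower semicontinuity of mass gives $M(P_\infty)\le\liminf M(P_n)=P_\infty(\Th_0)\le M(P_\infty)$, the last inequality from $\|\Th_0\|^*=1$. Equality throughout yields $M(P_\infty)=P_\infty(\Th_0)$, and as a byproduct $M(P_n)\to M(P_\infty)$.

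For (b) I would exploit that $\P$ is a diffeomorphism off $E$. Choosing $\rho$ among the good radii (all but an at most countable set) for which $\|T_\infty\|(\{|z|=\rho\})=0$ and $\|P_\infty\|(\PI\{|z|=\rho\})=0$, the restrictions converge, $T_{0,r_n}\res(\Sce\setminus\Scer)\rightharpoonup T_\infty\res(\Sce\setminus\Scer)$, and since $\pt$ is a fixed smooth diffeomorphism on $\Sce\setminus\Scer$ the push-forward commutes with the weak limit, so
\[
P_\infty\res(\Ace\setminus\Acer)=\pt{}_*\big(T_\infty\res(\Sce\setminus\Scer)\big)=P(T_\infty)\res(\Ace\setminus\Acer).
\]
Letting $\rho\to0$ along good radii gives $P_\infty=P(T_\infty)$ on $\Ace\setminus E$.

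Step (c) is the point I expect to be the main obstacle, since it is exactly the danger that mass of $P_n$ escapes onto the exceptional divisor in the limit, producing a spurious component of $P_\infty$ supported on $E$. The remedy is the uniform-in-$n$ estimate already built into the proof of Lemma \ref{lem:normalposcycle}(i): the neighborhood $\Acer=\PI(\Scer)$ of $E$ is the proper transform of $T_{0,r_n}$ over the small ball $\Scer=\Sce\cap B_\rho$, and the dyadic decomposition $\Scer=\cup_{2^{-j}<\rho}A_j$, together with $M(T_{0,r_n}\res A_j)\le K\,2^{-2pj}$ and the comass bound $K(p,n)\,2^{2(p-1)(j+1)}$ for the pulled-back forms on $A_j$, yields the tail estimate $M(P_n\res\Acer)\le K(p,n)\sum_{2^{-j}<\rho}2^{2p-2-2j}=:\omega(\rho)$, with $\omega(\rho)\to0$ as $\rho\to0$ uniformly in $n$ (the constants depending only on $K$ and the dimensions). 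By lower semicontinuity on the open set $\Acer$, $\|P_\infty\|(\Acer)\le\liminf_n\|P_n\|(\Acer)\le\omega(\rho)$, so $\|P_\infty\|(E)=\lim_{\rho\to0}\|P_\infty\|(\Acer)=0$; the same estimate applied to the cone $T_\infty$ gives $\|P(T_\infty)\|(E)=0$. This closes the identification $P_\infty=P(T_\infty)$, and with (a) the lemma follows.
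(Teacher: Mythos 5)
Your steps (a) and (b) are correct and essentially identical to the paper's own argument: positivity passes to the limit because the constants $c_n$ in $|\Th_n-\Th_0|\le c_n\,\text{dist}_{g_0}(\cdot,E)$ vanish, and since the proper transform is a diffeomorphism off $E:=\CP^n\times\{0\}$, $P_\infty$ and ${\pt}_* T_\infty$ agree on $\Ac\setminus\Acr$ for good radii $\rho$. The genuine gap is in step (c), exactly at the point you yourself flag as the crux. The uniform tail bound you invoke is \emph{not} "already built into" the proof of Lemma \ref{lem:normalposcycle}(i): there, the dyadic comass bound $K(p,n)\,2^{2(p-1)(j+1)}$ on $A_j$ holds only for the mixed terms $\sum_{m=1}^{p}(\vt_{\CP^n})^{p-m}(\vt_{\C^n})^m$ (and, with exponent $2p-1$, for $\Th-\Th_0$). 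The purely horizontal term $(\vt_{\CP^n})^p$ is excluded, for a structural reason: ${\pt}^*\vt_{\CP^n}$ is invariant under the dilations $\la_r$, so its comass on $A_j$ scales like $2^{2pj}$, and against $M(T_{0,r_n}\res A_j)\le K\,2^{-2pj}$ each annulus contributes $O(1)$ — the dyadic sum for this term \emph{diverges}. That is precisely why Lemma \ref{lem:normalposcycle}(i) treats it separately, via Stokes' theorem ($\vt_{\CP^n}=d\eta$) and slice-mass bounds along good radii, which yields only a bound $K(p,n)$ independent of $\rho$, not a quantity tending to $0$ with $\rho$. Hence your $\omega(\rho)$ does not exist as claimed, and the part of the mass left uncontrolled is exactly the part that detects concentration on the exceptional divisor: a piece of $P_\infty$ lying inside $E$ has horizontal tangent planes, hence positive action on $(\vt_{\CP^n})^p$ and essentially none on the mixed terms.

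The paper closes this hole with an idea absent from your proposal: $(\vt_{\CP^n})^p$ is invariant under the maps $\La_r$ of (\ref{eq:dilblow}), and the proper transform intertwines $\la_r$ with $\La_r$; hence $(P_n\res\Acer)\left((\vt_{\CP^n})^p\right)=(P\res{\Ac}_{r_n\rho})\left((\vt_{\CP^n})^p\right)\le M(P\res{\Ac}_{\rho})$, where $P$ is the proper transform of the single, \emph{fixed} current $T$. For this one finite-mass current one does have $M(P\res{\Ac}_{\rho})\to\|P\|(E\cap\Ac)=0$ as $\rho\to 0$ (the vanishing uses the monotone way $P$ is built in Lemma \ref{lem:normalposcycle}: the measures $\|P_\rho\|$ increase to a finite measure giving no mass to $E$ — an argument unavailable for the non-monotone family $P_n$), and this is what provides the uniformity in $n$ that your dyadic estimate cannot: the horizontal mass of every $P_n$ in $\Acer$ is dominated by the mass of the fixed $P$ near $E$. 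The mixed terms are then indeed bounded by $C(p,n)\rho^2$ by the scaling argument you describe. Without this reduction of the horizontal estimate for the family $P_n$ to an estimate for the fixed current $P$, step (c) — and with it the identification $P_\infty={\pt}_*T_\infty$ — remains unproven.
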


\begin{proof}
$\Th_0$-positiveness follows straight from the $\Th_n$-positiveness of $P_n$ and $|\Th_n-\Th_0| < c_n \text{dist}_{g_0}(\cdot, \CP^n \times \{0\})$, $c_n \to 0$.

The proper transform is a diffeomorphism away from the origin, thus

$$P_\infty \res (\Ac \setminus \Acr) = \lim_n {\pt}_* T_{0,r_n} \res (\Sce \setminus \Scer) = {\pt}_* T_\infty \res (\Sce \setminus \Scer),$$

so in order to conclude that $P_\infty$ is the proper transform of ${\pt}_* T_\infty$ we only need to show $P_\infty= \lim_{\rho \to 0} P_\infty \res (\Ac \setminus \Acr)$, i.e. that $M( P_\infty \res \Acr) \to 0$ as $\rho \to 0$.

Recall that $\vt_0=\vt_{\CP^n} + \vt_{\C^n}$; we want to estimate $M(P_\infty \res \Acr)= (P_\infty \res \Acr)(\Th_0) = \lim_{n \to \infty} (P_n \res \Acr)(\Th_0)$. Write

\be
\label{eq:est1}
(P_n \res \Acr)(\Th_0)=\frac{1}{p !}(P_n \res \Acr)((\vt_{\CP^n})^p) + \frac{1}{p !}(P_n \res \Acr)(\sum_{k=1}^{p} (\vt_{\CP^n})^{p-k} (\vt_{\C^n})^k).
\ee

The second term on the r.h.s. is bounded as follows:

$$(P_n \res \Acr)\left(\sum_{k=1}^{p} (\vt_{\CP^n})^{p-k} (\vt_{\C^n})^k\right) = (\La_\rho)_*(P_n \res \Acr) \left ((\La_r^{-1})^*\left( \sum_{k=1}^{p} (\vt_{\CP^n})^{p-k} (\vt_{\C^n})^k\right) \right).$$

The current $(\La_\rho)_*(P_n \res \Acr)$ is the proper transform of $T_{0, \rho r_n}$, therefore $M\left((\La_\rho)_*(P_n \res \Acr)\right) \leq K$ independently of $n$; the form in brackets has comass bounded by $\rho^2$. Altogether 

$$(P_n \res \Acr)\left(\sum_{k=1}^{p} (\vt_{\CP^n})^{p-k} (\vt_{\C^n})^k\right) \leq C(p,n) \rho^2 .$$

To bound the first term on the r.h.s. of (\ref{eq:est1}), let $P$ be the proper transform of $T$; using $(\La_r)^* (\vt_{\CP^n})^{p} = (\vt_{\CP^n})^{p}$ we can write

$$(P_n \res \Acr)((\vt_{\CP^n})^p)) = (P \res {\Ac}_{r_n \rho}) (\vt_{\CP^n})^{p} \leq M\left( P \res {\Ac}_{r_n \rho} \right) \leq M\left( P \res {\Ac}_{\rho} \right) $$

so it goes to $0$ as $\rho \to 0$ (uniformly in $n$). Summarizing we get that $(P_n \res \Acr)(\Th_0)$ is $o(1)$ of $\rho \to 0$ uniformly in $n$; therefore so is $M(P_\infty \res \Acr) = \lim_{n \to \infty} (P_n \res \Acr)(\Th_0)$ and we can write $P_\infty = \lim_{\rho \to 0} P_\infty \res (\Ac \setminus \Acr)$.

\medskip

\end{proof}

For a positive-$(p,p)$ integral cycle in $\R^{2n+2}$, as we have already mentioned, each tangent cone is determined by a $(p-1, p-1)$ integral cycle $L_\infty$ in $\CP^n$ that is calibrated by $\frac{(\theta_{\CP^n})^{p-1}}{(p-1)\text{!}}$, the normalized power of the K\"ahler form.

The previous lemma tells us that, for a sequence $r_n \to 0$ such that $T_{0,r_n} \rightharpoonup T_\infty$, the proper transforms $P_n := {\pt}_* T_{0,r_n}$ converge to ${\pt}_* T_\infty$, i.e. to the current $L_\infty \times \llbracket D^2 \rrbracket$. Indeed, the fact that a cone $T_\infty$ is radially invariant translates into the fact that its proper transform is invariant under the action of $\Lambda_\rho$ for any $\rho>0$.

\begin{proof}[\textbf{proof of proposition \ref{Prop:restate}}]
 
Let $T_{0,r_n} \rightharpoonup T_\infty$, a possible tangent cone. Let $L_\infty$ be the holomorphic $(p-1, p-1)$-integral cycle in $\CP^n$ that identifies $T_\infty$. 

If $y_0$ is a point in the support of $L_\infty$, then there exists a sequence of points $0 \neq y_j \to 0$ such that $H\left(\frac{y_j}{|y_j|}\right) \to y_0$ (where $H$ is the Hopf projection) and radii $\delta_j$ such that each ball $B_{\delta_j}(y_j)$ contains a set $\mathcal{C}_j$ of strictly positive $\mathcal{H}^{2p}$-measure, $\mathcal{C}_j$ is contained in the support of $T_{0,r_n}$ and the balls $B_{\delta_j}(y_j)$ are disjoint.

If we take a different sequence $T_{0,R_n} \rightharpoonup \tilde{T}_\infty$, we still find a sequence of points as before, since $\frac{y_j}{|y_j|}$ is not changed under radial dilations. Take the proper transforms $\tilde{P}_n$ of $T_{0,R_n}$. The density is preserved almost everywhere under the push-forward via a diffeomorphism. For each $\tilde{P}_n$ we find that, by upper semi-continuity of the density, $y_0$ is a point of density $\geq 1$ for $\tilde{P}_n$ (for all $n$). Therefore $y_0$ is of density $\geq 1$ for the limit of the currents $\tilde{P}_n$, i.e. for $\tilde{P}_\infty = {\pt}_* \tilde{T}_\infty$: this follows from the monotonicity formula, with an argument as in remark \ref{oss:contmass}.

This proves that any point in the support of $L_\infty$ is also in the support of $\tilde{L}_\infty$, the holomorphic $(p-1, p-1)$-integral cycle in $\CP^n$ that identifies $\tilde{T}_\infty$. Since $\tilde{T}_\infty$ is an arbitrary tangent cone this concludes the proposition and therefore lemma \ref{lem:uniqsupport} is proved, i.e. all tangent cones must have the same support.
\end{proof}

Now we have to make sure that any two links $L_\infty$ and $\tilde{L}_\infty$ (obtained by a blow up with different sequences $r_n$ and $R_n$) have multiplicities that agree a.e. 

\medskip

The following lemma should be known, but we recall it for sake of completeness.

\begin{lem}
\label{lem:closedconn}
Let $C$ be a semicalibrated cycle of dimension $m$ in $\R^n$. For any $x_0$ the set of tangent cones to $C$ at $x_0$ is a closed and connected subset (for the flat distance, which metrizes the weak*-topology on currents of equibounded mass and boundary mass, see \cite{G}).
\end{lem}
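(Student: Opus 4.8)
The plan is to realize the set $\mathcal{T}_{x_0}$ of tangent cones to $C$ at $x_0$ as the \emph{limit set} of the one-parameter family of blow-ups $r \mapsto C_{x_0,r}$ as $r \downarrow 0$, and then to invoke the general topological fact that the limit set of a continuous curve into a compact metric space is closed and connected. Two ingredients make this precise. First, by the almost-monotonicity formula (Proposition \ref{Prop:monotonicity}) the family $\{C_{x_0,r}\}_{r \le r_0}$ has equibounded mass and, being made of cycles, equibounded (namely zero) boundary mass; hence by Federer--Fleming compactness it lives in a set that is compact and metrized by the flat distance $d$. Thus $\mathcal{T}_{x_0}$, being the set of subsequential weak limits as $r \to 0$, equals $\bigcap_{\varepsilon>0}\overline{\{C_{x_0,r}: 0 < r \le \varepsilon\}}$, an intersection of compact sets, which is automatically closed; this disposes of the closedness claim.

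Second, and this is the crux, I would show that the map $r \mapsto C_{x_0,r}$ is continuous for the flat distance on $(0,r_0]$. Fix $R>0$; for $r$ small enough $(\eta_r)_\ast C$ is defined on $B_R$, where $\eta_r(x)=(x-x_0)/r$, and since $\partial C = 0$ it is a cycle there. For $r,r'$ close let $h\colon \llbracket 0,1\rrbracket \times \mathbb{R}^n \to \mathbb{R}^n$ be the affine homotopy joining $\eta_r$ and $\eta_{r'}$. The homotopy formula, using $\partial C=0$, gives
\[
(\eta_{r'})_\ast C - (\eta_r)_\ast C = \partial\, h_\ast(\llbracket 0,1\rrbracket \times C),
\]
and $M\big(h_\ast(\llbracket 0,1\rrbracket \times C)\big) \le \kappa_R\,|r-r'|$ on $B_R$, because the homotopy displaces points by $O(|r-r'|)$. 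Hence $d(C_{x_0,r},C_{x_0,r'}) \to 0$ as $r' \to r$, i.e. the curve is continuous. The restriction to $B_1$ is handled by running the argument on the \emph{full} blow-up cycles in a fixed $B_R$ (so that no slicing boundary enters the homotopy formula) and only passing to $\res B_1$ at the very end, each tangent cone being a genuine cone.

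With continuity in hand, connectedness follows from the standard contradiction argument. Suppose $\mathcal{T}_{x_0}= A \sqcup B$ with $A,B$ nonempty, closed and disjoint; by compactness $3\delta := d(A,B)>0$, and the open $\delta$-neighbourhoods $A_\delta, B_\delta$ are disjoint. Pick interlaced radii $r_1 > s_1 > r_2 > s_2 > \cdots \downarrow 0$ with $C_{x_0,r_k}\in A_\delta$ and $C_{x_0,s_k}\in B_\delta$ (possible since both $A$ and $B$ are attained as limits). By continuity of $r\mapsto C_{x_0,r}$ on each interval $(s_k,r_k)$ there is $t_k$ with $C_{x_0,t_k}\in \mathbb{R}^n\setminus(A_\delta\cup B_\delta)$, a closed set, and $t_k \downarrow 0$. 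A subsequential limit of $C_{x_0,t_k}$ then lies in $\mathcal{T}_{x_0}$ but outside $A_\delta \cup B_\delta \supseteq A\cup B=\mathcal{T}_{x_0}$, a contradiction. Hence $\mathcal{T}_{x_0}$ is connected.

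The main obstacle I anticipate is purely the bookkeeping around the restriction to $B_1$ and the metrization claim: one must verify that on the equibounded-mass-and-boundary family the flat distance genuinely induces the weak-$\ast$ topology (so that ``subsequential weak limits'' and ``limit points for $d$'' coincide), and that passing from the full blow-up cycles to their restrictions $\res B_1$ does not spoil continuity. This is precisely why I prefer to carry the entire limit-set argument out on the unrestricted cones in a fixed ball $B_R$, invoking $\partial C = 0$ to keep the homotopy formula boundary-free, and to restrict only at the final step.
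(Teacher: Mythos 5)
Your proposal is correct and takes essentially the same route as the paper: closedness via a limit-of-limits argument (your intersection of compact tails is the paper's diagonal argument in different clothing), and connectedness by continuity of the blow-up family $r \mapsto C_{x_0,r}$ in the flat distance combined with the standard separation/accumulation contradiction. The only difference is one of detail: the paper merely asserts that the family $C_{x_0,r}$ is continuous, whereas you justify it with the homotopy-formula estimate on the unrestricted cycles in $B_R$ --- a worthwhile addition, since that is precisely the step the paper leaves unproved.
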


\begin{proof}[\textbf{proof of lemma \ref{lem:closedconn}}]
Let $\Upsilon$ be the set of all possible tangent cones at $x_0$. Given a sequence $\{T_k\}_{k=1}^\infty$ in $\Upsilon$ assume that $T_k \rightharpoonup T$. We want to show that $T \in \Upsilon$. 

The assumption $T_k \in \Upsilon$ means that there exists a sequence $r_n^k \to 0$ such that as $n \to \infty$ we have $C_{x_0, r_n^k} \rightharpoonup T_k$. With a diagonal argument we get $T \in \Upsilon$.

\medskip

Now, to prove connectedness, assume by contradiction that $\Upsilon = \Upsilon_1 \cup \Upsilon_2$, where $\Upsilon_1$ and $\Upsilon_2$ are closed and disjoint. Then there exist (in the space of currents) $A_1$ and $A_2$ open disjoint neighbourhoods respectively of $\Upsilon_1$ and $\Upsilon_2$. The family of currents $C_{x_o, r}$ ($r \geq 0$) is continuous and should therefore accumulate (as $r \to 0$) also somewhere outside $A_1$ and $A_2$, contradiction.
\end{proof}

\begin{lem}
\label{lem:contdensity}
Let $C_n$ and $C$ be integral cycles of dimension $k$ calibrated by a $k$-form $\om$. Assume that $C_n \rightharpoonup C$ and that the support $\mathcal{C}$ is the same for all $C_n$ and $C$ and it is compact. Let $\nu_n(x)$ denote the density at $x$ for $C_n$ and  $\nu(x)$ analogously the density at $x$ for $C$ (dealing with calibrated cycles, each $\nu_n$ or $\nu$ is well-defined everywhere).  

Then for every $x \in \Cc$ it holds $\nu(x)= \lim_{n \to \infty} \nu_n(x)$.
\end{lem}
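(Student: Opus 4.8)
The plan is to reduce the statement to a convergence statement for the mass \emph{measures}, use the monotonicity formula for the easy inequality, and exploit the integrality of the multiplicities together with the common support for the hard inequality.

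First I would record that the mass measures converge weakly-$*$. For a $\om$-calibrated cycle one has, for every nonnegative continuous compactly supported $f$,
$\int f\, d\|C_n\| = \int f \langle \om, \vec{C}_n\rangle\, d\|C_n\| = C_n(f\om)$,
since $\langle \om, \vec{C}_n\rangle = 1$ $\Hc^k$-a.e.; because $C_n \rightharpoonup C$ and $\om$ is a \emph{fixed} form, $C_n(f\om) \to C(f\om) = \int f\, d\|C\|$. Hence $\|C_n\| \rightharpoonup \|C\|$ as Radon measures (this is the measure-theoretic form of Remark \ref{oss:contmass}). In particular, for every radius $r$ with $\|C\|(\p B_r(x)) = 0$ --- all but countably many $r$, which I call \emph{good radii} --- one gets $\|C_n\|(B_r(x)) \to \|C\|(B_r(x))$.

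Next comes the easy inequality $\limsup_n \nu_n(x) \le \nu(x)$. Writing $f_\bullet(r) := \|\cdot\|(B_r(x))/(\alpha_k r^k)$, monotonicity makes $f_n$ non-decreasing (the infinitesimal perturbation being immaterial, as $\om$ is closed), so $\nu_n(x) = \lim_{r\to 0} f_n(r) = \inf_r f_n(r) \le f_n(r)$ for every $r$. Taking a good radius $r$ and letting $n\to\infty$ gives $\limsup_n \nu_n(x) \le f(r)$, and then $r\downarrow 0$ through good radii yields $\limsup_n \nu_n(x) \le \nu(x)$ at \emph{every} $x\in\Cc$. The same computation at a fixed $r_0$ gives the uniform bound $\nu_n \le \sup_n M(C_n)/(\alpha_k r_0^k) =: \Lambda$ on the compact set $\Cc$.

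The reverse inequality is the main obstacle, and it cannot follow from monotonicity and mass convergence alone: since the density is the \emph{infimum} of the ratios $f_n(r)$ over scales, the mass of $C_n$ could a priori concentrate at smaller and smaller scales around $x$ --- the monotone functions $f_n$ developing a steep rise near $0$ that disappears in the limit --- which is exactly the failure of lower semicontinuity of the density. Here the hypotheses that the supports coincide and that the multiplicities are integers rescue the argument. At $\Hc^k$-a.e. point $x\in\Cc$ the common support is, by the interior regularity of calibrated cycles, a connected $k$-dimensional $C^1$ submanifold inside a small ball $B$; as $C_n$ and $C$ are cycles supported in this submanifold, the constancy theorem forces $C_n\res B = m_n\llbracket \Cc\cap B\rrbracket$ and $C\res B = m\llbracket \Cc\cap B\rrbracket$ with $m_n, m \in \Z$. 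Testing $\|C_n\| \rightharpoonup \|C\|$ against a bump function supported in $B$ of positive $\Hc^k\res\Cc$-integral gives $m_n \to m$, hence $m_n = m$ for $n$ large since the $m_n$ are integers. As $\nu_n = m_n$ and $\nu = m$ on $\Cc\cap B$, I conclude $\nu_n(x) = \nu(x)$ for all large $n$ at every such $x$, i.e. $\nu_n \to \nu$ $\Hc^k$-a.e. --- precisely the a.e. agreement of multiplicities needed afterwards. For the remaining (null) set of singular points, the everywhere upper bound must be married to the monotonicity of the limit $C$; I expect this last step to be the most delicate and would isolate it at the end.
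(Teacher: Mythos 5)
Your first two steps (weak-$*$ convergence of the mass measures, the inequality $\limsup_n \nu_n(x) \le \nu(x)$ at every $x$, and the uniform bound on the densities) are correct and coincide with Claims (i) and (ii) of the paper's proof. The divergence, and the problem, is in the reverse inequality. Your constancy-theorem argument at points where $\Cc$ is locally a connected $C^1$ submanifold is fine as far as it goes, but it has two defects. First, it proves the lemma only at such points, i.e.\ $\Hc^k$-a.e.\ (and only after invoking a.e.\ interior regularity of calibrated cycles in arbitrary codimension, which is Almgren's big regularity theorem -- or, in the holomorphic case, the King/Harvey--Shiffman structure theory that the paper explicitly sets out to avoid). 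The statement to be proved is convergence at \emph{every} $x \in \Cc$, and you explicitly defer the singular points (``I expect this last step to be the most delicate and would isolate it at the end'') without supplying an argument. That deferred step is not a routine remainder: it is the actual content of the lemma. At a singular point of $\Cc$ no constancy theorem is available, and ``marrying the everywhere upper bound to the monotonicity of the limit'' cannot work by itself, since upper bound plus monotonicity is exactly what one has in general for weak limits, where lower semicontinuity of the density genuinely fails (your own steep-rise picture shows this). So the proposal, as written, does not prove the stated lemma.

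The paper's mechanism for handling all points simultaneously is different and is worth contrasting with yours: it uses the \emph{exact} monotonicity identity
$M(C_n \res B_R(x)) = \nu_n(x) + \int_{B_R(x)} \frac{|\vec{C}_y \wedge \p_r|^2}{|y-x|^k}\, \nu_n(y)\, d\Hc^k(y) \res \Cc$,
in which the kernel depends only on the common support $\Cc$ (up to sign of $\vec{C}_y$, which is immaterial) and hence is the \emph{same} for all $n$ and for $C$. Combining the convergence of the left-hand sides, reverse Fatou applied to $L - \nu_n$ (this is where the uniform bound of Claim (ii) enters), and the pointwise bound $\limsup_n \nu_n \le \nu$ of Claim (i) inside the integral, one forces $\nu(x) = \limsup_n \nu_n(x)$ at every $x$, and a subsequence trick upgrades this to the full limit. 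In other words, the common-support hypothesis is exploited through the kernel of the monotonicity formula, not through regularity of the support. A final remark: your a.e.\ version could conceivably be made to suffice for the application in the proof of the main theorem (the exceptional set there is a fixed null set determined by $\Cc$, so one could intersect good sets), but that would require rewriting that proof, would import Almgren-scale machinery into an argument designed to be elementary, and in any case does not establish the lemma as stated.
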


\begin{proof}[\textbf{proof of lemma \ref{lem:contdensity}}] We will achieve the proof in three steps.

\medskip

\textbf{Claim (i)} for every $x \in \Cc$ it holds $\nu(x) \geq \limsup_{n \to \infty} \nu_n(x)$.

\medskip

This follows from the monotonicity formula. Indeed, let $B_r(x)$ be the ball around $x$ with radius $r$. By remark \ref{oss:contmass}, the weak convergence $C_n \rightharpoonup C$ yields $M(C_n \res B_r(x)) \to M(C \res B_r(x))$. By monotonicity we have $M(C_n \res B_r(x)) \geq \alpha_k \nu_n(x) r^k$, thus it must hold, for all $r>0$,

$$M(C \res B_r(x)) \geq \alpha_k (\limsup_{n \to \infty} \nu_n(x)) r^k .$$

Since $\nu(x) = \lim_{r \to 0} \frac{M(C \res B_r(x))}{\alpha_k r^k}$ we can conclude \textbf{claim (i)}.

\medskip

\textbf{Claim (ii)} There exists $L>0$ such that $\nu_n, \nu \leq L$ everywhere on $\Cc$.

\medskip

For each fixed $C_n$ (resp. $C$), the density $\nu_n$ (resp. $\nu$) is a bounded function: this follows from the facts that the mass is locally finite, the monotonicity formula holds and the density is upper semi-continuous. So, in order to prove claim (ii), assume by contradiction that here exist points $p_n \in \Cc$ such that $\nu_n(p_n) \uparrow + \infty$ as $n \to \infty$. Up to a subsequence that we do not relabel we can assume $p_n \to p$ for a point $p$ in $\Cc$. Choose a ball $B_R(p)$ and let $m>0$ be chosen so that $M(C \res B_R(p))=\alpha_k \cdot m \cdot R^2$. Choose $n_0$ large enough so that for all $n \geq n_0$ it holds (i) $\theta_n(p_n) \geq 3m$ and (ii) $|p_n- p| < \frac{R}{10}$. Then consider the balls $B_{\frac{9 R}{10}}(p_n)$: they are contained in $B_R(p)$. 

By the monotonicity formula applied at $p_n$ for $C_n$, we get $M(C_n \res B_{\frac{9 R}{10}}(p_n)) \geq \alpha_k (3m) \frac{9^2 R^2}{10^2}$ and therefore $M(C_n \res B_R(p)) \geq \alpha_k (3m) \frac{9^2 R^2}{10^2}$. By remark \ref{oss:contmass} we must have $M(C_n \res B_R(p)) \to M(C \res B_R(p))$, so we can write $M(C \res B_R(p)) \geq \alpha_k (3m) \frac{9^2 R^2}{10^2}$. Since $\frac{3 \cdot 9^2 }{10^2}>1$ we contradicted the assumption $M(C \res B_R(p))=\alpha_k \cdot m \cdot R^2$.
\medskip

\textbf{Claim (iii)} for every $x \in \Cc$ it holds $\nu(x) = \lim_{n \to \infty} \nu_n(x)$.

\medskip

It suffices to show, for an arbitrary $x$, that 

\be
\label{eq:dsfg}
\nu(x) = \limsup_{n \to \infty} \nu_n(x).
\ee

Once this is achieved, choose a subsequence $n_k$ such that $\liminf_{n \to \infty} \nu_n(x) = \lim_{n_k \to \infty} \nu_{n_k}(x)$ and apply (\ref{eq:dsfg}) to the sequence of currents $C_{n_k}$ to show that $\nu(x) =\lim_{n_k \to \infty} \nu_{n_k}(x)= \liminf_{n \to \infty} \nu_n(x)$.

\medskip

Again the main ingredient for (\ref{eq:dsfg}) is the monotonicity formula, which for an arbitrary $x \in \Cc$ states

\be
\label{eq:2mono}
\begin{split}
M(C_n \res B_R(x)) = \nu_n(x) + \int_{B_R(x)} \frac{|\vec{C}_y \wedge \p_r|^2}{|y-x|^k} \nu_n(y) d {\Hc}^k(y) \res \Cc \\
M(C \res B_R(x)) = \nu(x) + \int_{B_R(x)} \frac{|\vec{C}_y \wedge \p_r|^2}{|y-x|^k} \nu(y) d {\Hc}^k(y) \res \Cc,
\end{split}
\ee

where the unit simple $k$-vector $\vec{C}_y$ represents the approximate tangent to $\Cc$ at $y$ with the orientation given on $C_n$ and $\p_r$ is the radial unit vector (with respect to the point $x$). Therefore the function $\frac{|\vec{C}_y \wedge \p_r|^2}{|y-x|^k}$ is independent of $n$ (since the underlying $\Cc$ is always the same and $\pm \vec{C}_y$ both yield the same value for $|\vec{C}_y \wedge \p_r|$).

Let $\mu$ be the finite measure $\frac{|\vec{C}_y \wedge \p_r|^2}{|y-x|^k} \cdot {\Hc}^k(y) \res (\Cc \cap B_R(x))$. By \textbf{claim (ii)} we can apply Fatou's lemma to $L-\nu_n$ and $L-\nu$ to get 

$$\int \limsup_n \nu_n(y) d \mu(y) \geq \limsup_n \int  \nu_n(y) d \mu(y) ,$$

which together with \textbf{claim (i)} yields 

$$\int \nu(y)d \mu(y)  \geq \limsup_n \int  \nu_n(y) d \mu(y) .$$

We can now use this last inequality together with claim (i) and the fact that $M(C_n \res B_R(x)) \to M(C \res B_R(x))$ to pass to the limit in (\ref{eq:2mono}) as $n \to \infty$: we get that necessarily we have the equality $\nu(x) = \limsup_{n \to \infty} \nu_n(x)$. 

\end{proof}

\begin{proof}[\textbf{proof of theorem \ref{thm:mainrect}}]
Let $\Upsilon$ be the family of possible tangent cones to $T$ at $x_0$. The elements of $\Upsilon$ are integral $(p-1, p-1)$-cycles (in $\CP^{n}$, the projective space of $\C^{n+1} \equiv T_x \Mc$) calibrated by $\frac{(\theta_{\CP^n})^{p-1}}{(p-1)\text{!}}$ and by lemma \ref{lem:uniqsupport} they all have the same support.  

\medskip

First we are going to prove that there exists a subset ${\Upsilon}_d \subset \Upsilon$ that is countable and dense in $\Upsilon$, i.e. $\Upsilon$ is separable. This is achieved as follows.

All currents in $\Upsilon$ are supported on the same rectifiable set $\mathcal{C}$ and they can only differ by the choice of the density function. We can represent $\mathcal{C} \setminus \tilde{\mathcal{C}}$, where $\tilde{\mathcal{C}}$ is a ${\Hc}^{2p}$-null set, as the image of a Borel subset $K$ of $\R^{2p}$ via a Lipschitz map taking values in $\CP^{n}$ and with Lipschitz constant $\frac{1}{2} \leq L \leq 2$. To obtain this representation, recall (see \cite{Morgan}) that $\mathcal{C} \setminus \tilde{\mathcal{C}}$ is a countable union of disjoint pieces, each piece being the image, via a Lipschitz map with constant close to $1$, of a compact subset of $\R^{2p}$. We can freely change by translation the position of these countably many compact subset of $\R^{2p}$ and make them disjoint, so by denoting their union with $K$ we get the desired representation for $\mathcal{C} \setminus \tilde{\mathcal{C}}$.

For each current in $\Upsilon$ the density on the rectifiable set $\Cc$ is an $L^1$ function on $\left(\Cc, {\Hc}^{2p}\right)$. Using the previous representation of $\Cc$, we can record these densities as $L^1$ functions on $\R^{2p}$ that are zero outside of $K$. The family $\Upsilon$ therefore yields a family $\{g_a\}_{a \in \Upsilon}$ of such $L^1(\R^{2p})$ functions. Every such $L^1$ function on $\R^{2p}$ is associated to a current supported on ${\Cc}$. 

The family $\{g_a\}$ is compact in $L^1$: indeed the $L^1$-convergence for a sequence in $\{g_a\}$ yields the (weak*) convergence for the corresponding currents. So $\{g_a\}$ is closed with respect to the $L^1$-norm, because $\Upsilon$ is closed with respect to the weak*-topology. Moreover $\{g_a\}$ is bounded in $L^1$ because $\int_{\R^{2p}} g_a d \mathcal{L}^{2p}$ is comparable (up to a factor $2$, recall the condition on $L$) to the mass of the corresponding current which is fixed for all elements of $\Upsilon$. 

We conclude that, as a compact subspace of the separable normed space $L^1(\R^{2p})$, $\{g_a\}$ is also separable. The corresponding countable set of currents is the desired ${\Upsilon}_d$.

\medskip

Except on a ${\Hc}^{2p}$-null set $\tilde{\mathcal{C}'}$, all points of $\mathcal{C} \setminus \tilde{\mathcal{C}'}$ have integer densities for all currents in ${\Upsilon}_d$.

Let now $x \in \mathcal{C} \setminus \tilde{\mathcal{C}'}$ and observe the function $F$ from ${\Upsilon}_d$ to $\R$ assigning to every current $P \in {\Upsilon}_d$ the value $F(P):=\nu_P(x)$, where $\nu_P$ is the density of $P$. By lemma \ref{lem:contdensity} the function $F$ is continuous on the metric space  ${\Upsilon}_d$, but since it is also integer-valued it must be locally constant on ${\Upsilon}_d$.

\medskip

${\Upsilon}_d$ is dense in $\Upsilon$, so for avery current $P' \in \Upsilon$ the value $\nu_{P'}(x)$ is also locally constant by lemma \ref{lem:contdensity}. Since $\Upsilon$ is connected, $\nu_{P'}(x)$ must then be globally constant for $P' \in \Upsilon$. The point $x \in \mathcal{C} \setminus \tilde{\mathcal{C}'}$ was arbitrary, therefore all currents in $\Upsilon$ have a fixed density at all points except on the null set $\tilde{\mathcal{C}'}$ and this makes them equal as currents. A posteriori also the density on $\tilde{\mathcal{C}'}$ is fixed.

We have therefore obtained that $\Upsilon$ is made of one single element and we can conlcude the uniqueness theorem \ref{thm:mainrect} for tangent cones.
\end{proof}

\end{document}